\newcommand{\irr}{\operatorname{irr}}
\newcommand{\case}[1]{\paragraph*{Case #1:}}
\newtheorem{hypothesize}{Hypothesize}
\DeclarePairedDelimiter\floor{\lfloor}{\rfloor}
\newtheorem{theorem}{Theorem}[section]
\newtheorem{lemma}[theorem]{Lemma}
\newtheorem{proposition}{Proposition}[section]
\newtheorem{corollary}[theorem]{Corollary}
\newtheorem{definition}{Definition}
\newtheorem{example}{Example}
\author{Jasem Hamoud}
\address{\textbf{Jasem Hamoud} 
Department of Discrete Mathematics, Moscow Institute of Physics and Technology
}
\email{khamud@phystech.edu}
\thanks{}
\author{Duaa Abdullah}
\address{\textbf{Duaa Abdullah:} Department of Discrete Mathematics, Moscow Institute of Physics and Technology }
\email{abdulla.d@phystech.edu}
\thanks{}
\title{Topological Indices Among Strong Support Vertex}
\date{}
\begin{document}

\begin{abstract}
In this paper, we provide the irregularity properties of trees with strong support vertex by analyzing two prominent topological indices: the Albertson index and the Sigma index. We further establish extremal bounds for both indices across families of trees defined by given degree sequences. Let $T_1$ and $T_2$ be a star trees of order $n$, where $T_1 \cong T_2$, then, we provide Albertson index of $T_1 \cong T_2$. Let $\mathcal{T}_{n, \Delta}$ be a class of trees with $n$ vertices, there are a tree $T^{\prime} \in \mathcal{T}_{n, \Delta}$ such that $\irr(T^{\prime}) < \irr(T)$.

\end{abstract}

\maketitle

\noindent\rule{12.7cm}{1.0pt}

\noindent
\textbf{Keywords:} Trees, Support vertex, Degree, Sequence, Index.

\medskip

\noindent

\medskip

\noindent
{\bf MSC 2010:} 05C05, 05C12, 05C35, 68R10.

\noindent\rule{12.7cm}{1.0pt}

\section{Introduction}\label{sec1}
Throughout this paper. Let $G=(V,E)$ be a simple graph of order $n$, with vertices set $V=\{v_1,v_2,\dots,v_n\}$ and edges set $E=\{e_1,e_2,\dots,e_m\}$. Let $e=u v \in E(G)$ an edge, then the imbalance of $e$ known as $\left|d_C(u)-d_G(v)\right|$ it is employee in ``Albertson Index''.  Dorjsembe,S., et al. in~\cite{Dorjsembe2023GutmanLI} for a path $u_0 u_1 \cdots u_t$ in graph $G$, then $imb_G\left(u_0, u_t\right)=\sum_{i=0}^{t-1}\left|d_G\left(u_i\right)-d_G\left(u_{i+1}\right)\right|$. In 1997, Albertson in~\cite{ALBERTSON} mention to the imbalance of an edge $uv$ by $imb(uv)$ where we considered a graph $G$ is regular if all of its vertices have the exact same degree, then the irregularity measure defined in~\cite{ALBERTSON, GUTMAN2, abdo2014total} as: 
\[
\operatorname{irr}(G)=\sum_{uv\in E(G)}\lvert d_u(G)-d_v(G) \rvert.
\]
 Ali, A. et al in~\cite{Albalahi2025DimitrovAli} mention to total irregularity of $\mathscr{D}(G)=(d_1,d_2,\dots,d_i)$ a degree sequence of $G$ where $d_1\ge d_2\ge\cdots\ge d_n$, it defined in Definition~\ref{degreeseq}, then we have $\operatorname{irr}(G)=2(n+1)m - 2 \sum_{i=1}^n id_i$. 
Thus, a``total irregularity of Albertson index'', defined in \cite{IrregularityGutmanKulli,AbdoGutmanDimitrov, Ali2023Abeer,Buyantogtokh2021}  as:
\[
\operatorname{irr}_T(G)=\sum_{|u, v| \leq V(G)}\left|d_G(u)-d_G(v)\right|, \quad \operatorname{irr}_T(G)=\sum_{\{u,v\}\in E(G)}^{} |d_G(v)-d_G(u)|.
\]
Andrade, E., et al. in~\cite{AndradeRobbianoLenes} introduce a topological index of graph $G$ with $m \geq 1$ edges, it known ``The Zagreb index'' of $G$, denote by $Z_{g}(G)$, then $Z_{g}(G)=\sum_{i \in V(G)} d^{2}(i)$, for $i=1,2, \ldots, k$, where $k=\min \{n, m\}$.The first and the second Zagreb index, $M_1(G)$ and $M_2(G)$ are defined in~\cite{TRINAJSTIC, WILCOX,ALBERTSON} as: 
\[
M_1(G)=\sum_{i=1}^{n}d_i^2, \quad \text{and} \quad M_2(G)=\sum_{uv\in E(G)} d_u(G)d_v(G).
\] 
Actually, an alternative expressions introduced by M. Matej\'i et al. in~\cite{ALBERTSON, Nikoli} for the first Zagreb index  as $M_1(G)=\sum_{u\sim v}(d_u+d_v)$.  The recently introduced $\sigma(G)$ irregularity index is a simple diversification of the previously established Albertson irregularity index, in~\cite{AbdoGutmanDimitrov,Mandal2022Prvanovic,YurttasGutmanTogan,Yang2023Deng,Albalahi2025DimitrovAli,Ali2023Abeer} defined as: 
\[
\sigma(G)=\sum_{uv\in E(G)}\left( d_u(G)-d_v(G) \right)^2.
\]
Also in \cite{Mandal2022Prvanovic,Albalahi2025DimitrovAli} mention to relation between Albertson index and Sigma index as $\sqrt{\sigma(G)} \leq \rm{irr}(G) \leq \sqrt{m\sigma(G)}$, where $\rm{irr}(G) \leq (n-1)(n-2)$, for any tree of order $n\geq 2$ with equalities if and only if $G$ is regular.\par 
This paper is organized as follows. Section~\ref{sec2} reviews relevant definitions and preliminary results. In section~\ref{sec3}, we present our main theorems concerning the Albertson index in trees, by subsection~\ref{subsec1}, we provide Albertson Index among strong support vertex. In section~\ref{sec4}, we provide advanced results for Sigma index in trees among strong support vertex.
\section{Preliminaries}\label{sec2}
In this section, we presented several fundamental concepts that will be deployed for the major results in the section~\ref{sec3} where we considered $\mathscr{D}(G)=\left(d_{G}\left(v_{1}\right), d_{G}\left(v_{2}\right), \ldots, d_{G}\left(v_{n}\right)\right)$ be a degree sequence of $G$ defined in Definition~\ref{degreeseq} had different with Asymptotic Degree Sequence by Molloy and Reed in~\cite{Molloy95Reed}. In Definition~\ref{TreeIsomorphic}, we presented Tree Isomorphic for employee in Theorem~\ref{resn1}. 

\begin{definition}[Degree Sequence~\cite{AshrafiGhalavand,Zhang2013Gray}]~\label{degreeseq}
Let $G=(V,E)$ be a simple graph, where $V=\{v_{1}, v_{2}, \ldots, v_{n}\}$, let $\mathscr{D}(G)=\left(d_{G}\left(v_{1}\right), d_{G}\left(v_{2}\right), \ldots, d_{G}\left(v_{n}\right)\right)$ be a degree sequence of $G$ where $d_{G}\left(v_{1}\right) \geqslant d_{G}\left(v_{2}\right) \geqslant \cdots \geqslant d_{G}\left(v_{n}\right)$. When $\mathscr{D}(G)=(k, k, \ldots, k)$, then $G$ is regular of degree $k$. Otherwise, the graph is irregular
\end{definition}
 Ghalavand, A., Ashrafi, A., R. in~\cite{Ghalavand2020Ashrafi} mention to a graphic sequence of a degree sequence $\mathscr{D}=\left(d_1, d_2, \ldots, d_n\right)$ for a simple graph $G$ with vertex set $V(G)=\left\{v_1, v_2, \ldots, v_n\right\}$ such that $d_i=\operatorname{deg}_G\left(v_i\right)$ where $ 1 \leq i \leq n$. we denote to the degree sequence by $D(G)$, it defined as
\[
D(G)=\left(d_1, d_2, \ldots, d_n\right)=(\underbrace{x_1, \ldots, x_1}_{n_1 \text { times }}, \underbrace{x_2, \ldots, x_2}_{n_2 \text { times }}, \ldots, \underbrace{x_t, \ldots, x_t}_{n_i \text { times }}),
\]
where$n_1,n_2,\dots , n_i$ are positive integers, and $d_1=x_1>x_2>\dots>x_i=d_n$. In case $n_1+n_2+\dots+n_i=k$, then the degree sequence become $D(G)=\left(d_1, d_2, \ldots, d_n\right)=(x_{1}^{n_1},x_{2}^{n_2},\dots,x_{i}^{n_i})$. 
\begin{definition}[Asymptotic Degree Sequence~\cite{Molloy95Reed}]
Let $\mathscr{D}(G)=\left(d_{G}\left(v_{1}\right), d_{G}\left(v_{2}\right), \ldots, d_{G}\left(v_{n}\right)\right)$ be a degree sequence defined in Definition~\ref{degreeseq}, then if $\mathcal{D} = (d_i(v_1), d_i(v_2), \ldots,d_i(v_n))$ it known an asymptotic degree sequence when $d_i(v_1)=d_i(v_2)=\ldots=d_i(v_n)=d_i(v)$, then:
\[
\begin{cases}
    d_i(v) = 0 & \text{ for } i \geq v, \\
    \sum_{i=0}^{n} d_i(v) = n & \text{ for } v \geq i.
\end{cases}
\]
\end{definition}
Let $G$ be a regular graph of order $n$ and size $m$, with a degree sequence $\mathscr{D}(G)=$ $\left(d_{1}, d_{2}, \ldots, d_{n}\right)$ and  let $r>0$ integer number. In~\cite{AshrafiGhalavand} show
\[
\frac{\left(d_{1}+r\right)+\left(d_{2}+r\right)+\cdots+\left(d_{n}+r\right)}{n} \geqslant \sqrt[n]{\left(d_{1}+r\right)\left(d_{2}+r\right) \cdots\left(d_{n}+r\right)}
\]
with determine condition $d_{1}=d_{2}=\cdots=d_{n}$, when $\sum_{i=1}^{n} d_{i}=2 m$ we have
\[
\frac{n^{n}\left(d_{1}+r\right)\left(d_{2}+r\right) \cdots\left(d_{n}+r\right)}{(2 m+r n)^{n}} \leqslant 1.
\]
\begin{definition}[Tree Isomorphic]~\label{TreeIsomorphic}
Let $T_1$ and $T_2$ be a trees of order $n$ with degree sequences $\mathscr{D} = (d_1, d_2, \ldots, d_n)$ and $\mathscr{X}= (x_1, x_2, \ldots, x_n)$, where $d_n \geq \dots \geq d_1$ and $x_n \geq \dots \geq x_1$. We say that $T_1 \cong T_2$ (see Figure~\ref{fig:Isomorphic}) if there exists a bijection
\[
f: V(T_1) \to V(T_2)
\]
such that for all $u, v \in V(T_1)$, $\{u, v\} \in E(T_1)$ where $\{f(u), f(v)\} \in E(T_2)$.
In this case, the degree sequences $\mathscr{D}$ and $\mathscr{X}$ are identical up to the ordering of vertices: $\deg_T(\mathscr{D})=\deg_T(\mathscr{X})$.
\end{definition}
\begin{figure}[H]
    \centering
   \begin{tikzpicture}[scale=.8]
  \node[circle, draw, fill=black, inner sep=2pt, label=below:$v_1$] (v1) at (0,0) {};
  \node[circle, draw, fill=black, inner sep=2pt, label=above:$v_2$] (v2) at (-1,1) {};
  \node[circle, draw, fill=black, inner sep=2pt, label=above:$v_n$] (v4) at (1,1) {};
  \draw (v1) -- (v2);
  \draw (v1) -- (v4);
  \node at (0,-1) {$T_1$};
  \node at (0,1) {$\dots$};
  \node[circle, draw, fill=black, inner sep=2pt, label=below:$u_1$] (u1) at (4,0) {};
  \node[circle, draw, fill=black, inner sep=2pt, label=above:$u_2$] (u2) at (3,1) {};
  \node[circle, draw, fill=black, inner sep=2pt, label=above:$u_n$] (u4) at (5,1) {};
  \draw (u1) -- (u2);
  \draw (u1) -- (u4);
  \node at (4,-1) {$T_2$};
   \node at (4,1) {$\dots$};
\end{tikzpicture}
    \caption{demonstrates $T_1 \cong T_2$.}
    \label{fig:Isomorphic}
\end{figure}
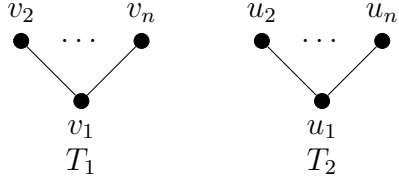
\begin{proposition}\label{three.c}
 Let $\mathscr{D}=(d_1,d_2,d_3)$ a degree sequence where $d_1\geqslant d_2 \geqslant d_3$, then Albertson index define as:
 	\[
 	\irr(T)=\left\lbrace
 	\begin{aligned}
 		& \irr_{max}(T)=(d_1-1)^2+(d_2-1)^2 +(d_3-1)(d_3-2)(d_1-d_3)(d_2-d_3) \\
 		& \irr_{min}(T)=(d_1-1)^2+(d_3-1)^2+(d_2-1)(d_2-2)+(d_1-d_3).
 	\end{aligned} 
 	\right. 
 	\]
 \end{proposition}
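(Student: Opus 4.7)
The plan is to decompose
\[
\irr(T)=\sum_{uv\in E(T)}\bigl|d_G(u)-d_G(v)\bigr|
\]
according to which pair of degree values the endpoints of each edge realize. Since the degree set is $\{d_1,d_2,d_3\}$ with $d_1\ge d_2\ge d_3$, every edge contributes one of the three values $|d_1-d_2|,\ |d_1-d_3|,\ |d_2-d_3|$ (or zero when both endpoints lie in the same class). Letting $n_i$ denote the number of vertices of degree $d_i$ and $e_{ij}$ the number of edges joining a vertex of degree $d_i$ to one of degree $d_j$, the tree condition gives the handshake constraints $\sum_i n_i=n$ and $\sum_i d_i n_i=2(n-1)$, from which the $e_{ij}$ are forced once the adjacency pattern is specified.

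First I would reduce the proposition to a counting statement by rewriting
\[
\irr(T)=e_{12}(d_1-d_2)+e_{13}(d_1-d_3)+e_{23}(d_2-d_3),
\]
and then identify which realizations of $\mathscr{D}=(d_1,d_2,d_3)$ can arise as a tree. Next I would set up the two extremal configurations by an edge-swap (or $\pi$-transformation) argument: the maximum is attained when the degree-$d_1$ and degree-$d_2$ vertices are forced apart so that the big jump $d_1-d_3$ is realized on as many edges as possible, while the minimum is attained when the gap is split, i.e.\ the degree-$d_2$ vertex sits between the degree-$d_1$ and degree-$d_3$ parts so that an edge contributing $d_1-d_3$ is replaced by one contributing $d_1-d_2$ plus one contributing $d_2-d_3$. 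A routine telescoping verifies the direction of the swap, and an induction on the number of ``bad'' edges reduces to the extremal skeleton.

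Finally, I would substitute the resulting counts $e_{ij}$ into the decomposition above and simplify. The leaf contribution at the degree-$d_1$ vertex produces a factor $(d_1-1)^2$, a symmetric contribution at degree $d_2$ (respectively $d_3$) gives $(d_2-1)^2$, $(d_2-1)(d_2-2)$ or $(d_3-1)^2$, and the internal edges joining different classes contribute the cross-terms in $(d_1-d_3)$ and $(d_2-d_3)$; collecting these yields the two claimed closed forms.

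The main obstacle will be pinning down the extremal tree shape uniquely from the degree triple and then matching the resulting expression to the quartic term $(d_3-1)(d_3-2)(d_1-d_3)(d_2-d_3)$ that appears in $\irr_{\max}(T)$. This factor is unusual because it mixes two ``deficiency'' factors at the low-degree class with two ``gap'' factors, which suggests that the edges it records are pendant paths of length two whose interior vertex has degree $d_3$; carefully verifying that the number of such paths equals $(d_3-1)(d_3-2)$ (and not, say, $\binom{d_3}{2}$) and that each contributes precisely $(d_1-d_3)(d_2-d_3)$ to the index will be the most delicate bookkeeping step. Once this identification is made, the minimum formula should follow by the same swap argument applied in reverse.
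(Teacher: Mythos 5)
The paper offers no proof of Proposition~\ref{three.c} at all --- it is stated bare in the preliminaries --- so your attempt can only be judged on its own terms. Your general strategy (read the triple as the degrees of the internal vertices of a caterpillar, charge $(d_i-1)^2$ to the leaves hanging at an end internal vertex and $(d_i-1)(d_i-2)$ to the leaves at the central one, and use an edge-swap argument to decide which degree sits in the middle) is the right one, and it does recover $\irr_{\min}$: placing $d_2$ centrally gives $(d_1-1)^2+(d_3-1)^2+(d_2-1)(d_2-2)+(d_1-d_2)+(d_2-d_3)$, whose last two terms telescope to $(d_1-d_3)$. Note, however, that the statement never fixes which trees realize $\mathscr{D}$; under the literal reading (a tree on three vertices, necessarily $P_3$ with degree sequence $(2,1,1)$) neither displayed formula equals $\irr(P_3)=2$, so any complete proof must begin by pinning down this caterpillar convention, which you defer rather than resolve.

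The step that would fail is your treatment of the term $(d_3-1)(d_3-2)(d_1-d_3)(d_2-d_3)$. You propose to realize it as $(d_3-1)(d_3-2)$ pendant paths, each contributing $(d_1-d_3)(d_2-d_3)$; but no edge, and hence no pendant path, can ever contribute a \emph{product} of two degree gaps to $\irr$, since each edge contributes a single absolute difference. Consequently the Albertson index of any tree realizing $\mathscr{D}$ is quadratic in the $d_i$, and no bookkeeping can produce a quartic term. The configuration your own swap argument identifies as extremal (smallest degree $d_3$ placed in the middle of the internal path) yields $(d_1-1)^2+(d_2-1)^2+(d_3-1)(d_3-2)+(d_1-d_3)+(d_2-d_3)$, and one checks this exceeds the $d_2$-central arrangement by $2(d_2-d_3)\geqslant 0$; the displayed $\irr_{\max}$ is evidently this expression with the plus signs dropped, i.e.\ a misprint. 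Rather than inventing a combinatorial interpretation to match the printed product --- which cannot be done --- you should prove the additive formula and flag the discrepancy with the statement.
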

 \begin{definition}
Let $\mathcal{S}$ be a class of graphs, then we have $\operatorname{irr}_{\max},\operatorname{irr}_{\min}$ Albertson index of a graph $G$, where: \begin{gather*} \operatorname{irr}_{\max}(\mathcal{S}) =\max \{\operatorname{irr}(G)\mid G\in \mathcal{S}\}, \\
\operatorname{irr}_{\min}(\mathcal{S}) =\min \{\operatorname{irr}(G)\mid G\in \mathcal{S}\}.
\end{gather*}
 \end{definition}

\begin{lemma}[Yang J, Deng H., M.,~\cite{Yang2023Deng}]\label{le2.1}
	Let $G$ has the maximal Sigma index among all connected graphs with $n$ vertices and $p$ pendant vertices, where $n, p$ are positive integers such that $1\leq p \leq n-3$. Then: $\Delta(G)=n-1$. 
\end{lemma}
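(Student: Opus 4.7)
The plan is to argue by contradiction. Suppose $G$ maximises $\sigma$ in the class of connected graphs on $n$ vertices with exactly $p$ pendants ($1\le p\le n-3$), yet $\Delta(G) \leq n-2$. I will construct a graph $G'$ in the same class with $\sigma(G') > \sigma(G)$, contradicting the maximality of $G$.

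Pick $u \in V(G)$ with $d_G(u) = \Delta$. Because $\Delta + 1 < n$ and $G$ is connected, some vertex $v$ lies outside $N[u]$; I would take $v$ at distance exactly $2$ from $u$, so that a common neighbour $w \in N(u) \cap N(v)$ exists. The basic transformation I would employ is the rotation $G' := G - vw + uv$, which deletes $vw$ and inserts $uv$. After this move, only the degrees of $u$ and $w$ change: $d_{G'}(u) = \Delta + 1$ and $d_{G'}(w) = d_G(w) - 1$, while $d_{G'}(v) = d_G(v)$ and all other degrees are fixed.

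Three verifications are in order. \textbf{Connectivity}: the path $w, u, v$ in $G'$ replaces the single deleted edge, so $G'$ stays connected. \textbf{Pendant preservation}: the status of $v$ is unchanged, $u$ remains non-pendant (as $\Delta \geq 2$), and $w$ stays non-pendant precisely when $d_G(w) \geq 3$. When $d_G(w) \geq 3$ no correction is needed; when $d_G(w) = 2$, I would couple the rotation with a local edit (shifting a pre-existing pendant) to restore the count, which is feasible since $p \leq n-3$ guarantees at least three non-pendant vertices. \textbf{Strict increase of $\sigma$}: expanding $\sigma(G')-\sigma(G)$ edge by edge yields positive contributions from the new edge $uv$ and from each edge $uy$ with $y \in N(u)$ (the term $2(\Delta-d_G(y))+1 \geq 1$ since $u$ has maximum degree), together with an extra $4(\Delta - d_G(w) + 1) \geq 4$ on the edge $uw$ common to both, set against a potentially negative contribution from the degree decrease at $w$.

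The main obstacle is controlling those negative contributions in tandem with the pendant book-keeping. I would argue that the gain at $u$'s neighbourhood scales like $\Delta$, whereas the potential loss at $w$'s neighbourhood scales like $d_G(w) \leq \Delta$, and the difference is made strictly positive by the new edge $uv$ contributing $(\Delta+1-d_G(v))^2$, which dominates the removed $(d_G(v)-d_G(w))^2$ after factoring as $(\Delta+1-d_G(w))(\Delta+1+d_G(w)-2d_G(v))$ and exploiting $\Delta+1-d_G(w) \geq 1$. In the residual degenerate configurations (such as $d_G(w)=2$ combined with $d_G(v)=\Delta$, or $v$ itself a pendant), I would invoke a tailored Kelmans-type move, re-routing every edge $vz$ with $z \notin N[u]$ to $u$, for which an analogous sign analysis, aided by the constraint $p \leq n-3$, still yields the required strict inequality and hence contradicts the assumed maximality of $\sigma(G)$.
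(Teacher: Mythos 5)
This lemma is not proved in the paper at all: it is imported verbatim from Yang and Deng~\cite{Yang2023Deng} and used as a black box, so there is no internal proof to compare your attempt against; I can only judge the attempt on its own merits, and it has two genuine gaps. The central one is that your single rotation $G'=G-vw+uv$ does not in general satisfy $\sigma(G')>\sigma(G)$, so the contradiction does not follow. You account for the gains at $u$ and the swap $vw\mapsto uv$, but the edges from $w$ to its \emph{other} neighbours $z\in N_G(w)\setminus\{u,v\}$ each change by $-2\bigl(d_G(w)-d_G(z)\bigr)+1$, and when $w$ has many pendant neighbours this loss is quadratic in $d_G(w)$, not linear as you assert (``the potential loss at $w$'s neighbourhood scales like $d_G(w)$''); the gain at $u$ is likewise at most quadratic, so neither side dominates automatically. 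Concretely, take $\Delta=5$, let every neighbour of $u$ have degree $5$, and let $w$ have neighbours $u$, $v$ and three pendants: the four edges $uy$ gain $1$ each, the edge $uw$ gains $4$, the swap $vw\mapsto uv$ gains $11-2d_G(v)$, but the three pendant edges at $w$ lose $21$ in total, giving $\sigma(G')-\sigma(G)=-2-2d_G(v)<0$. Your ``tailored Kelmans-type move'' is invoked only for the cases $d_G(w)=2$ or $v$ pendant, so this configuration is not covered; a correct argument has to either re-route \emph{all} of $w$'s (or $v$'s) non-$u$ edges to $u$ at once and redo the sign analysis uniformly, or choose the vertex to be demoted more carefully than ``any common neighbour of $u$ and $v$''.

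The second gap is the pendant bookkeeping. When $d_G(w)=2$ the rotation creates a new pendant at $w$, taking $G'$ out of the class, and your proposed repair (``shifting a pre-existing pendant'') is not a defined operation: any such shift changes two more degrees and hence perturbs $\sigma$ by an amount that must itself be bounded below, which you do not do. Since the hypothesis $1\le p\le n-3$ is exactly what makes these boundary configurations unavoidable, this case cannot be dismissed as a footnote; as written the argument establishes the conclusion only for graphs in which every common neighbour of $u$ and a distance-two vertex has degree at least $3$ and few pendant neighbours.
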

\section{Main Result}\label{sec3}
In this section, we provide strong support vertex in Definition~\ref{Strongsupportvertex} applying for Proposition~\ref{caterpillarvertex}. By considered $\mathscr{D}(G)=\left(d_{G}\left(v_{1}\right), d_{G}\left(v_{2}\right), \ldots, d_{G}\left(v_{n}\right)\right)$ be a degree sequence of $G$ defined in Definition~\ref{degreeseq} had satisfying Theorem~\ref{resn1}.

\subsection{Albertson Index Among Strong Support Vertex}~\label{subsec1}
\begin{definition}[Strong support vertex]~\label{Strongsupportvertex}
Let $T=(V,E)$ be a tree of order $n$, a strong support vertex $v\in V$ in $T$ is known a vertex that is adjacent to at least one leaf (a vertex of degree 1) or instead that adjacent to at least two pendant
vertices (see~\cite{Dehgardi2025N}). When $\deg(v)\geqslant3$ is a strong support vertex as we show that in Figure~\ref{strongsupportvertex} where the tree has strong support vertices with degree at least 3 as: $v_0$ has degree (4), $v_{0,3}$ has degree (3) and $v_{0,4}$ has degree (3). All of $v_{0,1}$,$v_{0,2}$ called leafs.
\end{definition}
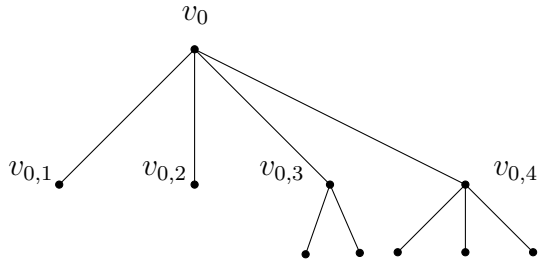
\begin{figure}[H]
    \centering
   \begin{tikzpicture}[scale=.9]
\draw   (3,6)-- (1,4);
\draw   (3,6)-- (3,4);
\draw   (3,6)-- (5,4);
\draw   (3,6)-- (7,4);
\draw   (7,4)-- (7,3);
\draw   (7,4)-- (8,3);
\draw   (7,4)-- (6,3);
\draw   (5,4)-- (4.64,2.97);
\draw   (5,4)-- (5.44,2.99);
\draw (2.6503835914787306,6.8) node[anchor=north west] {$v_0$};
\draw (0.09140054666084313,4.5) node[anchor=north west] {$v_{0,1}$};
\draw (2.066755879502721,4.5) node[anchor=north west] {$v_{0,2}$};
\draw (3.8,4.5) node[anchor=north west] {$v_{0,3}$};
\draw (7.252063628212651,4.5) node[anchor=north west] {$v_{0,4}$};
\begin{scriptsize}
\draw [fill=black] (3,6) circle (1.5pt);
\draw [fill=black] (1,4) circle (1.5pt);
\draw [fill=black] (3,4) circle (1.5pt);
\draw [fill=black] (5,4) circle (1.5pt);
\draw [fill=black] (7,4) circle (1.5pt);
\draw [fill=black] (7,3) circle (1.5pt);
\draw [fill=black] (8,3) circle (1.5pt);
\draw [fill=black] (6,3) circle (1.5pt);
\draw [fill=black] (4.64,2.97) circle (1.5pt);
\draw [fill=black] (5.44,2.99) circle (1.5pt);
\end{scriptsize}
\end{tikzpicture}
    \caption{Tree with a strong support vertex}
    \label{strongsupportvertex}
\end{figure}
Actually, in this case, Albertson index is: 
\[
\irr(T)=\sum_{i=1}^{4}\lvert \deg (v_0)-\deg (v_{0,i})\rvert+2 \times\lvert\deg(v_{0,3})-1\rvert+3\times \lvert\deg(v_{0,4})-1\rvert.
\]

 \begin{hypothesize}~\label{hyfour}
 Let $T$ be tree  of order $n=4$, a degree sequence $\mathscr{D}=(d_1,d_2,d_3,d_4)$ where $d_1 \ge d_2 \ge d_3 \ge d_4$. Then we have: 
  \[
 \irr(T)=\sum_{i=1}^{3}(d_i-1)^2+\sum_{i=1}^{3}(d_4-d_i)+(d_4-1)(d_4-3).
 \]
 \end{hypothesize}
\begin{proof}
Let be consider $\mathscr{D}=(d_1,d_2,d_3,d_4)$ be a degree sequence  with the term $d_4>d_1\geq d_2\geq d_3$, so that according to Proposition~\ref{three.c} for $d_1\geq d_2\geq d_3$ we have $\sum_{i=1}^{3}(d_i-1)^2$ it is known the sum of squared deviations of three values of vertices $d_1,d_2,d_3$ where $d_1\geq d_2\geq d_3$. Thus, furthermore according to Hypothesize~\ref{hyfourtr}, we have
$\sum_{i=1}^{3}(d_i-1)^2=\sum_{i=1}^{3}d_i^2-2\sum_{i=1}^{3}d_i+3$.\par 
According to our term $d_4>d_1\geq d_2\geq d_3$ we notice for vertices $d_4$ it is define $d_4^2-4d_4+3=(d_4-1)(d_4-3)$ true occurs only when $d_4\geq 3$ as we know topological indices quantifying structural irregularity define the sum of degree is $2(n-1)$, So that when $d_4>d_1\geq d_2\geq d_3$ then we have $d_4-d_1$ and $d_4-d_2$ and $d_4-d_3$ we can express that by $\sum_{i=1}^{3}(d_4-d_i)$.

As desire.
\end{proof}
\begin{hypothesize}~\label{hyfourtr}
 Let $T$ be tree  of order $n=4$, a degree sequence $\mathscr{D}=(d_1,d_2,d_3,d_4)$ where $d_1 \ge d_2 \ge d_3 \ge d_4$. Then we have: 
\[
\irr(T)=\begin{cases}
    & \irr_{\max}(T)=\sum_{i=1}^4\left(d_i-1\right)^2+d_1+d_2-d_3-3 d_4+2 \\
	& \irr_{\min}(T)=\sum_{i=1}^4\left(d_i-1\right)^2+d_1-d_2-d_3-d_4+2.
\end{cases}
\]
 \end{hypothesize}
 \begin{corollary}~\label{cor.1}
According to Hypothesize~\ref{hyfourtr} we have the difference between $\irr_{\max}(T),\irr_{\min}(T)$ is define the bound: 
\[
d(\irr_{\max}(T),\irr_{\min}(T))<2d_1.
\]
\end{corollary}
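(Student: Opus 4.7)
The plan is to carry out a direct algebraic computation, since the statement only asks for a bound on the difference of two closed-form expressions given by Hypothesize~\ref{hyfourtr}. First I would subtract $\irr_{\min}(T)$ from $\irr_{\max}(T)$; the common terms $\sum_{i=1}^4(d_i-1)^2$ and the additive constant $+2$ cancel immediately, leaving only the linear-in-$d_i$ contributions. Collecting those yields
\[
\irr_{\max}(T)-\irr_{\min}(T)=(d_1+d_2-d_3-3d_4)-(d_1-d_2-d_3-d_4)=2(d_2-d_4).
\]

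Next I would invoke the ordering hypothesis $d_1\geq d_2\geq d_3\geq d_4$ and the fact that in any tree of order $n\geq 2$ the minimum degree satisfies $d_4\geq 1$ (every vertex is incident to at least one edge, in particular the leaves). This gives $d_2-d_4\leq d_1-1<d_1$, and multiplying by $2$ yields the desired strict bound
\[
d(\irr_{\max}(T),\irr_{\min}(T))=2(d_2-d_4)<2d_1.
\]

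The only potentially subtle point, and thus the main thing worth flagging in the write-up, is the justification of the strict inequality rather than merely $\leq 2d_1$. This is not an obstacle but a remark: it relies on the structural fact that a tree has no isolated vertices, so that $d_4\geq 1$, which combined with $d_2\leq d_1$ forces $d_2-d_4<d_1$. No case analysis or use of the Albertson-index machinery is needed beyond the two formulas in Hypothesize~\ref{hyfourtr}, so the corollary follows as a short consequence of that hypothesize.
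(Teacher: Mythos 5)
Your computation is correct and is essentially the argument the paper intends: the paper merely asserts that Corollary~\ref{cor.1} follows ``directly'' from Hypothesize~\ref{hyfourtr} and confirms it numerically in Table~\ref{tabex.9}, whereas you supply the omitted algebra, obtaining $\irr_{\max}(T)-\irr_{\min}(T)=2(d_2-d_4)$, which indeed matches the ``Diff'' column of that table. Your justification of the strict inequality via $d_2\leq d_1$ and $d_4\geq 1$ is sound, so nothing further is needed.
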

\begin{corollary}~\label{cor.2}
Let be a degree sequence $d=(d_1,d_2,d_3,d_4)$ where $d_1 \geq d_2 \geq d_3 \geq d_4$, then we define the bound which it holds for all valid degree sequences under the given constraints as 
\[
\irr(T) \geq \floor*{\frac{d_1^2+d_4^2}{2}}.
\]
\end{corollary}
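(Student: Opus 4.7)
The plan is to derive the bound from Hypothesize~\ref{hyfourtr} by combining its $\irr_{\min}$ formula with the tree handshake identity $d_1+d_2+d_3+d_4 = 2(n-1) = 6$, and then to finish by enumerating the small number of admissible degree sequences. First I would invoke $\irr(T) \geq \irr_{\min}(T)$ and expand Hypothesize~\ref{hyfourtr}: writing $(d_i-1)^2 = d_i^2 - 2d_i + 1$ and applying $\sum d_i = 6$ collapses the right-hand side to
\[
\irr_{\min}(T) = d_1^2 + d_2^2 + d_3^2 + d_4^2 + d_1 - d_2 - d_3 - d_4 - 6.
\]
Substituting $d_2+d_3+d_4 = 6-d_1$ into the linear term and doubling gives the equivalent form $2\,\irr_{\min}(T) = 2\sum_{i=1}^{4} d_i^2 + 4d_1 - 24$.

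Since $\irr_{\min}(T)$ is a non-negative integer, to establish $\irr_{\min}(T) \geq \lfloor (d_1^2+d_4^2)/2 \rfloor$ it suffices to prove the integer inequality $2\,\irr_{\min}(T) \geq d_1^2 + d_4^2 - 1$. Substituting the expression above reduces the claim to the purely algebraic condition
\[
d_1^2 + 2d_2^2 + 2d_3^2 + d_4^2 + 4d_1 - 23 \geq 0,
\]
which depends only on the degree sequence.

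To finish, I would use that for $n=4$ the constraints $d_1 \geq d_2 \geq d_3 \geq d_4 \geq 1$, $d_1 \leq n-1 = 3$, and $\sum d_i = 6$ leave exactly two admissible degree sequences, $(3,1,1,1)$ (the star $K_{1,3}$) and $(2,2,1,1)$ (the path $P_4$). Direct substitution gives left-hand values $3$ and $0$ respectively, so the inequality holds, with equality attained on $P_4$, which is consistent with $\irr(P_4) = 2 = \lfloor 5/2 \rfloor$.

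The main obstacle is the floor: the bound is tight precisely at $P_4$, where $\lfloor(d_1^2+d_4^2)/2\rfloor = 2$ but $(d_1^2+d_4^2)/2 = 5/2$, so naively replacing the floor by $(d_1^2+d_4^2)/2$ would fail. Passing through the integer inequality $2\,\irr_{\min}(T) \geq d_1^2+d_4^2-1$, which is equivalent to the floor bound because both sides are integers, is the cleanest way to sidestep this parity issue without splitting into even/odd cases.
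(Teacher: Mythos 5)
Your argument is correct for the statement as literally written (a tree of order $n=4$ with its actual degree sequence), and it is a genuinely different route from the paper's: the paper gives no derivation at all, merely asserting that Corollary~\ref{cor.2} ``directly results'' from Hypothesize~\ref{hyfourtr} and pointing to the numerical confirmation in Table~\ref{tabex.9}. Your reduction --- pass to $\irr_{\min}$, expand $(d_i-1)^2$, use the handshake identity $\sum d_i=2(n-1)=6$ to collapse the linear terms, and convert the floor bound into the integer inequality $2\,\irr_{\min}(T)\geq d_1^2+d_4^2-1$ --- is clean, and your handling of the tightness at $P_4$ (where $\lfloor 5/2\rfloor=2=\irr(P_4)$, so the unfloored bound would fail) is exactly the point the paper glosses over. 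What your approach buys is an actual proof in the honest setting; what it costs is scope: your use of $\sum d_i=6$ collapses the problem to the two sequences $(3,1,1,1)$ and $(2,2,1,1)$, whereas the paper's own Example~\ref{ex.9} evaluates the formulas of Hypothesize~\ref{hyfourtr} on sequences such as $(18,12,6,4)$ that cannot be the degree sequence of any tree on four vertices, so the authors evidently intend the inequality as a statement about the formal expressions over a much wider range of inputs. Your proof does not cover that broader (and internally inconsistent) reading, and your final algebraic criterion $d_1^2+2d_2^2+2d_3^2+d_4^2+4d_1-23\geq 0$ was derived under $\sum d_i=6$, so it cannot simply be re-used there. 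You should also flag, as the paper does not, that the whole argument rests on the unproven Hypothesize~\ref{hyfourtr}; fortunately its $\irr_{\min}$ formula can be verified directly on the star and the path, which makes your two-case check self-contained.
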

Actually, both of Corollary~\ref{cor.1},\ref{cor.2} it is directly results according to Hypothesize~\ref{hyfourtr} based on improvement Proposition~\ref{three.c} as we discussed that and depict in many figures.
\begin{example}~\label{ex.9}
Let be a degree sequence $d=(d_1,d_2,d_3,d_4)$ where $d_1\geq d_2\geq d_3\geq d_4$ define as $d_4\in\{3,4\}$, $d_3\in\{5,6\}$, $d_2\in\{9,12\}$ and $d_1\in\{14,16,18\}$. Then, we provide Table~\ref{tabex.9} for a value of degree sequence according to Albertson index for confirmation Corollary~\ref{cor.1},\ref{cor.2} as we show that.
\begin{table}[H]
    \centering
    \begin{tabular}{|c|c|c|c|c|c|c|c|}
\hline
$(d_1,d_2,d_3,d_4)$ & $\irr_{\max}(T)$ & $\irr_{\min}(T)$ & Diff & $(d_1,d_2,d_3,d_4)$ & $\irr_{\max}(T)$ & $\irr_{\min}(T)$ & Diff \\
\hline
(18,12,6,4) & 454 & 438 & 16 & (18,12,6,3) & 452 & 434 & 18 \\
\hline
(18,12,5,4) & 446 & 430 & 16 & (18,12,5,3) & 444 & 426 & 18 \\
\hline
(18,9,6,4) & 394 & 384 & 10 & (18,9,6,3) & 392 & 380 & 12 \\
\hline
(18,9,5,4) & 386 & 376 & 10 & (18,9,5,3) & 384 & 372 & 12 \\
\hline
(16,12,6,4) & 388 & 372 & 16 & (16,12,6,3) & 386 & 368 & 18 \\
\hline
(16,12,5,4) & 380 & 364 & 16 & (16,12,5,3) & 378 & 360 & 18 \\
\hline
(16,9,6,4) & 328 & 318 & 10 & (16,9,6,3) & 326 & 314 & 12 \\
\hline
(16,9,5,4) & 320 & 310 & 10 & (16,9,5,3) & 318 & 306 & 12 \\
\hline
(14,12,6,4) & 330 & 314 & 16 & (14,12,6,3) & 328 & 310 & 18 \\
\hline
(14,12,5,4) & 322 & 306 & 16 & (14,12,5,3) & 320 & 302 & 18 \\
\hline
(14,9,6,4) & 270 & 260 & 10 & (14,9,6,3) & 268 & 256 & 12 \\
\hline
(14,9,5,4) & 262 & 252 & 10 & (14,9,5,3) & 260 & 248 & 12 \\
\hline
\end{tabular}
\caption{Degree Sequence according to the term $d_1\geq d_2\geq d_3\geq d_4$.}\label{tabex.9}
\end{table}
Then we have: 
\[
\begin{cases}
    \irr_{\max}(T)=454 \quad \text{ when } d=(18,12,6,4), \\
  \irr_{\min}(T)=248 \quad \text{ when } d=(14,9,5,3).
\end{cases}
\]
\end{example}
\begin{proposition}~\label{caterpillarvertex}
Let $\mathcal{C}(n,m)$ be a caterpillar tree with $n$ vertices and $m$ pendent vertices, if $\mathcal{C}(n,m)$ has a maximum Albertson index, then $\mathcal{C}(n,m)$ has a strong support vertex.
\end{proposition}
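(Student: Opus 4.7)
The plan is an exchange argument by contradiction. Assume that $\mathcal{C}(n,m)$ attains the maximum Albertson index among caterpillars with $n$ vertices and $m$ pendants, yet that no vertex of $\mathcal{C}(n,m)$ is adjacent to two or more leaves. Then every support vertex carries exactly one pendant, and because $m \geq 2$, the spine contains at least two distinct support vertices. Among them, let $u$ be one of maximum degree with unique pendant $p_u$, and let $v$ be another support vertex with pendant $p_v$; by the choice of $u$ one has $\deg(u) \geq \deg(v)$.

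I transform $\mathcal{C}(n,m)$ into a new caterpillar $\mathcal{C}'$ by deleting the edge $vp_v$ and inserting the edge $up_v$. By construction $u$ is now adjacent to both $p_u$ and $p_v$, hence $u$ is a strong support vertex in $\mathcal{C}'$. Provided $v$ is chosen with $\deg(v)\geq 3$, which is possible in the non-degenerate regime, the spine of $\mathcal{C}(n,m)$ is preserved, and $\mathcal{C}'$ is again a caterpillar on $n$ vertices with exactly $m$ pendants. It then suffices to show $\irr(\mathcal{C}') > \irr(\mathcal{C})$ to reach the desired contradiction with the maximality of $\mathcal{C}(n,m)$.

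The key computation tracks the five groups of edges whose imbalance is altered under the transfer: the surviving pendant edge $up_u$ contributes $+1$ to $\irr(\mathcal{C}') - \irr(\mathcal{C})$; the new pendant edge $up_v$ contributes $+\deg(u)$; the deleted edge $vp_v$ contributes $-(\deg(v)-1)$; each spine edge at $u$ contributes $+1$ (using that $u$ has maximum degree among support vertices, together with a brief reduction handling the rare case of a spine neighbor of still higher degree); and each spine edge at $v$ contributes at worst $-1$. Summing, the net change is bounded below by $2(\deg(u) - \deg(v) + 1) > 0$, so $\irr(\mathcal{C}') > \irr(\mathcal{C})$.

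The main obstacle I expect is the careful bookkeeping of the spine-edge contributions near $u$ and $v$, and in particular the edge case $\deg(v) = 2$, in which removing $p_v$ turns $v$ itself into a leaf and thereby alters $m$. I plan to bypass this either by restricting attention to the non-degenerate configurations in which some support vertex of degree at least $3$ is available (which follows from the maximality hypothesis together with the degree-sequence results of Section~\ref{sec2}), or by a preliminary symmetrization that makes $u$ coincide with the globally maximum-degree vertex of $\mathcal{C}(n,m)$ before the exchange is performed. Once this edge case is dispatched, the positivity of the net change is immediate from the definition of the Albertson index and the choice $\deg(u) \geq \deg(v)$, completing the contradiction.
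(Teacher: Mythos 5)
Your route is genuinely different from the paper's. The paper does not argue by exchange at all: it fixes a special configuration in which the non-leaf vertices have degrees $3,5,7,\dots,p$, sums these degrees against the identity $\sum_{v}\deg(v)=2(n-1)$ to extract the relation $n=m^2+m+2$, and then concludes by a pigeonhole-style count ($m^2+2$ leaves versus $m$ non-leaf vertices) that some internal vertex must absorb several leaves. That argument is really a verification on one parametrized family (and it ends with ``each $v_i$ is likely to be adjacent to at least one leaf,'' which is not a deduction), whereas your local move --- detach $p_v$ from a support vertex $v$ and reattach it to a maximum-degree support vertex $u$, then show $\irr$ strictly increases --- is the standard extremal-graph-theory mechanism and actually engages the maximality hypothesis. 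Your bookkeeping is also correct as far as it goes: counting $\deg(u)-1$ spine edges at $u$ (each gaining $1$, since in a caterpillar any spine neighbour of degree at least $3$ is itself a support vertex and hence has degree at most $\deg(u)$) and $\deg(v)-1$ spine edges at $v$ (each losing at most $1$) gives exactly your bound $\irr(\mathcal{C}')-\irr(\mathcal{C})\geq 2(\deg(u)-\deg(v)+1)\geq 2>0$. So where the paper's proof buys only a consistency check on a hand-picked degree sequence, yours buys an actual contradiction with maximality, at the cost of having to police the class $\mathcal{C}(n,m)$ under the move.

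That cost is where the one genuine gap sits, and you have named it but not closed it: if $\deg(v)=2$ then deleting $vp_v$ turns $v$ into a leaf, so $\mathcal{C}'$ has $m+1$ pendants and leaves the comparison class; and under your contradiction hypothesis (every support vertex carries exactly one pendant) it can happen that \emph{every} support vertex has degree $2$ --- for instance when $\mathcal{C}(n,m)$ degenerates to a path, where $m=2$ and no vertex of degree $\geq 3$ exists at all. Your proposed fixes (appeal to ``non-degenerate configurations'' or to the degree-sequence material of Section~\ref{sec2}) are not yet arguments; you need either an explicit hypothesis such as $m\geq 3$ (which forces, via the pigeonhole bound on spine length, a support vertex of degree $\geq 3$ or an immediate strong support vertex) or a separate modification for the $\deg(v)=2$ case that stays inside $\mathcal{C}(n,m)$, e.g.\ contracting the spine edge at $v$ and re-subdividing elsewhere. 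Until that case is dispatched the contradiction is not complete, although the core exchange computation is sound.
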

\begin{proof}
Let $T=(V,E)$ be a tree of order $n$ with vertices set $V=\{v_1,\dots,v_i\}$ and edges set $E=\{e_1,\dots,e_j\}$. The degrees of the vertices are given as follows: $\deg(v_1)=3, \deg(v_2)=5,\deg(v_3)=7, \dots,\deg(v_i)=p$, where $p$ is prime number. The sum of the degrees of all vertices in the tree must be 
\begin{equation}~\label{eqsr01}
    \sum_{v\in V}\deg(v)=2(n-1),
\end{equation}
when $p$ is prime number, then~(\ref{eqsr01}) should be 
\begin{equation}~\label{eqsr02}
    \sum_{k=1}^{m}(2k+1)=m(m+2),
\end{equation}
Typically, vertices set $V=\{v_1,\dots,v_i\}$ not specified with high degrees are often leaves (degree 1), especially when constructing a tree with given degrees. Thus, let assume remains $n-i$ vertices are leaves with degree 1, then we have
\begin{equation}~\label{eqsr03}
 \sum_{v\in V}\deg(v)= \sum_{k=1}^{n}(2k+1)+(n-m)=m(m+2)+(n-m).
\end{equation}
By comparing~(\ref{eqsr01}),(\ref{eqsr03}) we have $n=m^2+m+2$. Since there are many leaves $m^2 + 2$ and only $m$ non-leaf vertices, each $v_i$ is likely to be adjacent to at least one leaf. In a star-like structure or a path among $v_1, v_2, \dots, v_i$, the remaining degree of each $v_i$ is used to connect to leaves.
\end{proof}
\begin{proposition}~\label{classoftrees}
Let $\mathcal{T}_{n, \Delta}$ be a class of trees with $n$ vertices, let $T \in \mathcal{T}_{n, \Delta}$ be a tree, and let $v_0 \in V(T)$ be a vertex with maximum degree $\Delta$. For any support vertex $v_{\ell}$ in $T$, different from $v_0$, where $\deg(v_{\ell})\geqslant 3$, then there exists another tree $T^{\prime} \in \mathcal{T}_{n, \Delta}$ such that $\irr(T^{\prime}) < \irr(T)$.
\end{proposition}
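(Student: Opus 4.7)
The plan is to exhibit $T'$ by a local pendant-relocation that reduces $\deg(v_\ell)$ by one while preserving both $n$ and $\Delta$. The heuristic is that each edge incident to $v_\ell$ contributes $|d_{v_\ell}-d_{\cdot}|$ to $\irr(T)$, with at least one of these imbalances being the large value $\deg(v_\ell)-1\geq 2$ coming from the leaf neighbor guaranteed by the support-vertex hypothesis; shrinking $\deg(v_\ell)$ by one should therefore reduce the imbalance contribution on each of its remaining edges in the generic case, while only mildly perturbing the imbalances at the landing site.

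Concretely, I would fix a leaf $u\in N_T(v_\ell)$, pick a neighbor $w\in N_T(v_0)$ with $\deg_T(w)\leq \Delta-1$ --- preferably a leaf of $v_0$, so that $\deg_T(w)=1$ --- and set
\[
T':= T - uv_\ell + uw.
\]
Since $T-uv_\ell$ disconnects into $\{u\}$ and its complement, $T'$ is again a tree on $n$ vertices; its degree sequence differs from that of $T$ only in shifting $\deg(v_\ell)$ down by $1$ and $\deg(w)$ up by $1$, so $T'\in\mathcal{T}_{n,\Delta}$. The change $\irr(T)-\irr(T')$ then decomposes into the removed-edge contribution $+(\deg(v_\ell)-1)$, the new-edge contribution $-\deg_T(w)=-1$ (when $w$ is a leaf of $v_0$), the $v_0w$ shift $(\Delta-1)-(\Delta-2)=+1$, and a signed count at $v_\ell$ equal to $+1$ for every remaining neighbor of degree $\leq\deg(v_\ell)-1$ and $-1$ for every remaining neighbor of degree $\geq \deg(v_\ell)$. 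In the benign case this evaluates to a manifestly positive quantity, and Lemma \ref{le2.1} together with the $\Delta$-constraint controls the size of $v_0$'s neighborhood from above.

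The main obstacle is the degenerate scenario in which $v_\ell$ has exactly one leaf (namely $u$) and every other neighbor of $v_\ell$ has degree at least $\deg(v_\ell)$: the signed count at $v_\ell$ then exactly cancels the $\deg(v_\ell)-1$ saving from the removed edge, and the transformation above yields only $\irr(T)-\irr(T')=0$. I would expect to resolve this by a twofold case split --- on whether $v_\ell$ has two or more leaves (the easy case, in which the cancellation cannot happen because a second leaf is among the ``remaining neighbors'' and contributes $+1$) or exactly one leaf (the hard case) --- and in the latter regime to refine the landing site by choosing $w$ along the path from $v_\ell$ to $v_0$, so that the structural change penetrates the high-degree cluster rather than sitting passively at a leaf. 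Parallel care is needed when $v_0$ has no leaf neighbor, since then $\deg_T(w)\geq 2$ and the new-edge contribution $-\deg_T(w)$ eats into the saving; nailing down these refinements, and verifying that the net change remains strictly positive in every subcase, is where I expect the bulk of the calculational work to lie.
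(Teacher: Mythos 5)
Your overall strategy --- delete a pendant edge at $v_{\ell}$ so that $\deg(v_{\ell})$ drops by one, reattach the detached leaf elsewhere, and track the resulting $\pm 1$ shifts in the edge imbalances --- is the same local pendant-relocation the paper uses; the only structural difference is the landing site (you graft the leaf onto a pendant neighbor $w$ of $v_0$, while the paper grafts it onto a second neighbor $y_2$ of $v_{\ell}$ itself). Your bookkeeping of $\irr(T)-\irr(T')$ is correct as far as it goes: $+(\deg(v_{\ell})-1)$ for the removed edge, $-\deg_T(w)$ for the new edge, $+1$ for the edge $v_0w$ when $w$ is a pendant of $v_0$, and $+1$ or $-1$ for each remaining edge at $v_{\ell}$ according to whether its other endpoint has degree $\leq\deg(v_{\ell})-1$ or $\geq\deg(v_{\ell})$. (The appeal to Lemma~\ref{le2.1} is spurious, though: that lemma concerns the maximal Sigma index and plays no role in this accounting.)

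The genuine gap is that the proposal stops exactly where the proof is needed. You correctly isolate the degenerate configuration --- $v_{\ell}$ has a single pendant neighbor and every other neighbor of $v_{\ell}$ has degree at least $\deg(v_{\ell})$, or $v_0$ has no pendant neighbor --- in which your transformation yields $\irr(T)-\irr(T')=0$ or less, but you only gesture at a remedy (``choosing $w$ along the path from $v_{\ell}$ to $v_0$'') without performing the computation that would show strict positivity there, and you say explicitly that this is where the bulk of the work remains. Since the proposition asserts a strict inequality for \emph{any} support vertex $v_{\ell}$ with $\deg(v_{\ell})\geq 3$, and a support vertex need only have one pendant neighbor, the unresolved case lies squarely inside the hypotheses; as written, the argument establishes the claim only in your ``easy case''. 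For comparison, the paper's own proof tacitly confines itself to that same easy case by assuming $v_{\ell}$ has two pendant neighbors $y_1,y_2$ (it computes $\lvert\deg_{T'}(y_1)-\deg_{T'}(y_2)\rvert=1$ and $\deg_{T'}(y_2)=2$), and its final step --- bounding $\irr(T)-\irr(T')$ from \emph{above} by $3\lambda-6$ and then inferring positivity from $3\lambda-6>0$ --- is not a valid deduction. So the hard case is open in both arguments; the difference is that you flag it honestly rather than passing over it.
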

\begin{proof}
Let $T$ be a tree with $x$ vertex, let $y\neq x$ be a strong support vertex where $\deg(y)=\lambda\geqslant 3$ where vertex $y$ has maximum degree $\Delta$ in $T$, let $\mathscr{N}_T(y)=\{y_1,y_2,\dots,y_{\lambda}\}$ be an open neighborhood  of $y$. In this case, we say $y_{\lambda}$ might be in $y$ or not, let $T^{\prime}$ be a tree compute from $T-\{y_1,y_2\}$ by linking with a pendant edge $y_2y_1$, where $T^{\prime} \in \mathcal{T}_{n, \Delta}$, then according to Definition~\ref{Strongsupportvertex}, noticed that $\deg_{T^{\prime}}(y) = \deg_T(y) - 1$, $\deg_{T^{\prime}}(y_1) = \deg_T(y_1)$ and $\deg_{T^{\prime}}(y_2) = d_T(y_2) + 1$, thus $\deg_{T^{\prime}}(y_i) = \deg_T(y_i)$ for $i = 3, \ldots, \lambda$, then
\begin{align*}
   \irr(T) -\irr(T^{\prime})&=\sum_{uv\in E(T)}\lvert \deg_{T}(u)-\deg_{T}(v)\rvert-\sum_{uv\in E(T^{\prime})}\lvert \deg_{T^{\prime}}(u)-\deg_{T^{\prime}}(v)\rvert\\
   &=\lvert \deg_{T}(y_1)-\deg_{T}(y)\rvert+\lvert \deg_{T}(y_2)-\deg_{T}(y)\rvert+\sum_{i=3}^{\lambda}\lvert \deg_{T}(y)-\deg_{T}(y_i)\rvert-\\
   &-\lvert \deg_{T^{\prime}}(y_1)-\deg_{T^{\prime}}(y_2)\rvert-\lvert \deg_{T^{\prime}}(y_2)-\deg_{T^{\prime}}(y)\rvert-\sum_{i=3}^{\lambda} \lvert (\deg_{T}(y)-1)-\deg_{T}(y_i)\rvert\\
   &=2\lambda-3+\sum_{i=3}^{\lambda}\lvert \lambda-\deg_{T}(y_i)\rvert +\lambda-3-\sum_{i=3}^{\lambda} \lvert (\lambda-1)-\deg_{T}(y_i)\rvert \\
&<3\lambda-6.
\end{align*}
Thus, we have the inequality
\begin{equation}~\label{eqcla021}
    3\lambda-6>0.
\end{equation}
holds $\irr(T) -\irr(T^{\prime})>0$.
As desire.
\end{proof}
\begin{proposition}~\label{pro.se.1}
Let $\mathcal{T}_{n, \Delta}$ be a class of trees with $n$ vertices, let $\mathcal{T}_1,\mathcal{T}_2$ be a trees of order $n$, let $\mathscr{D}_i=(x_1,x_2,\dots, x_{n-1})$ and $\mathscr{D}_j=(y_1,y_2,\dots, y_{n-1})$ be tow non increasing degree sequence, where $\sum_{i=0}^{n-1}d_i\leqslant \sum_{j=0}^{n-1}d_j$, then we have: 
\[
\irr(\mathcal{T}_{\mathscr{D}_i})\leqslant \irr(\mathcal{T}_{\mathscr{D}_j}).
\]
\end{proposition}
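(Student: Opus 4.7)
The plan is to reduce the statement to the closed-form expression
\[
\irr(G) \;=\; 2(n+1)m \;-\; 2\sum_{k=1}^{n} k\, d_k
\]
recalled in the Introduction for a non-increasing degree sequence $(d_1,\dots,d_n)$. I would first apply this identity to $\mathcal{T}_{\mathscr{D}_i}$ and $\mathcal{T}_{\mathscr{D}_j}$, using the handshake relation $2m=\sum_{k} d_k$ to rewrite both irregularities purely in terms of the two degree sequences. The difference $\irr(\mathcal{T}_{\mathscr{D}_j})-\irr(\mathcal{T}_{\mathscr{D}_i})$ then decomposes into a degree-sum contribution, which is non-negative by the hypothesis $\sum_k x_k \le \sum_k y_k$, and a positional-moment contribution of the form $\sum_k k\,x_k-\sum_k k\,y_k$ whose sign is the delicate one.

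Second, I would control the positional-moment contribution by exploiting the non-increasing ordering of both sequences. The key step is the Abel-summation rewriting
\[
\sum_{k=1}^{n-1} k\, d_k \;=\; \sum_{k=1}^{n-1} T_k,\qquad T_k \;=\; \sum_{j=k}^{n-1} d_j,
\]
which converts a comparison of moments into a comparison of tail sums of non-increasing sequences. Combining this identity with the hypothesis on the first tail $T_1$ and with the sorted order of the two sequences then yields, after routine book-keeping, the required weak inequality between the two weighted sums.

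Third, I would close the argument by an exchange procedure: starting from $\mathscr{D}_i$ and moving towards $\mathscr{D}_j$ through a chain of graphical non-increasing sequences differing by a single unit-transfer of degree between two coordinates, and verifying that each elementary step weakly increases the Albertson index by direct inspection of the formula above. Proposition~\ref{classoftrees} would be invoked to guarantee the existence of an actual tree in $\mathcal{T}_{n,\Delta}$ realizing each intermediate degree sequence, so that the bound propagates along the chain and survives the passage to the limiting sequence $\mathscr{D}_j$.

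The main obstacle, I expect, is handling the realizability of the intermediate sequences in the class $\mathcal{T}_{n,\Delta}$. A naive unit-transfer between two coordinates of a graphical sequence can easily violate the Erd\H{o}s--Gallai/Hakimi conditions for trees, or breach the maximum-degree constraint $\Delta$. Surmounting this step will require a careful case analysis in the spirit of the transformation $T\mapsto T^{\prime}$ used inside the proof of Proposition~\ref{classoftrees}, most likely combined with an induction on the Hamming distance between $\mathscr{D}_i$ and $\mathscr{D}_j$ so that only one admissible swap needs to be analysed at a time.
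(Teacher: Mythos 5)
Your opening reduction already fails at the first step: the identity $\irr(G)=2(n+1)m-2\sum_k k\,d_k$ quoted in the Introduction is the closed form of the \emph{total} irregularity $\irr_T(G)=\sum_{\{u,v\}\subseteq V(G)}\lvert d_u-d_v\rvert$, not of the Albertson index $\irr(G)=\sum_{uv\in E(G)}\lvert d_u-d_v\rvert$. The Albertson index depends on which pairs of vertices are actually adjacent and is therefore not a function of the degree sequence alone (two trees with identical degree sequences can have different values of $\irr$), so no formula in $(d_1,\dots,d_n)$ can serve as the starting point. Even granting the substitution, the argument degenerates: for any tree of order $n$ the handshake relation forces $\sum_k x_k=\sum_k y_k=2(n-1)$, so your ``degree-sum contribution'' is identically zero and the hypothesis gives no information beyond the equality of the first tails in your Abel summation; the sign of $\sum_k k\,x_k-\sum_k k\,y_k$ remains completely undetermined, and the ``routine book-keeping'' cannot be carried out. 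Finally, the exchange step asserts without verification that every unit transfer of degree between two coordinates weakly increases the Albertson index; this is false in general, since the effect of such a transfer on $\irr$ depends on the degrees of the neighbours of the affected vertices and not merely on the sequence, and Proposition~\ref{classoftrees} guarantees nothing about realizability of intermediate sequences --- it is a statement about decreasing $\irr$ by a local edge move on one given tree.

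For comparison, the paper's own argument takes a different (and likewise incomplete) route: it writes explicit closed-form expressions for $\irr(\mathcal{T}_{\mathscr{D}_i})$ and $\irr(\mathcal{T}_{\mathscr{D}_j})$ in terms of consecutive differences of the sorted degrees and then splits on whether the two degree sums are equal or strictly ordered, but in both cases it reaches the conclusion only under the additional assumption $\mathcal{T}_1\cong\mathcal{T}_2$, where the two sequences coincide and the inequality is trivial. Neither your proposal nor the paper's proof establishes the proposition as stated; note that for trees of equal order the hypothesis $\sum_k x_k\leqslant\sum_k y_k$ is automatically an equality, so the claimed conclusion would force all trees of order $n$ to have the same Albertson index, which is false.
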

\begin{proof}
Assume a degree sequence $\mathscr{D}_i=(x_1,x_2,\dots, x_{n-1})$ where $x_1\geqslant x_2\geqslant \dots\geqslant x_{n-1}$, then we have: 
\begin{align*}
\irr(\mathcal{T}_{\mathscr{D}_i}) &= \lvert \deg x_1-\deg x_2\rvert +\lvert \deg x_2-\deg x_3\rvert+\dots+\lvert \deg x_{n-2}-\deg x_{n-1}\rvert+\\
&+x_{1}^{2}+x_{n-1}^{2}+\sum_{i=2}^{n-2}\lvert \deg x_i+2\rvert\cdot\lvert \deg x_i-1\rvert-2.\\
&=\sum_{i=1}^{n-1} \lvert \deg x_i-\deg x_{i+1}\rvert+x_{1}^{2}+x_{n-1}^{2}+\sum_{i=2}^{n-2}\lvert \deg x_i+2\rvert\cdot\lvert \deg x_i-1\rvert-2.
\end{align*}
For a degree sequence $\mathscr{D}_j=(y_1,y_2,\dots, y_{n-1})$ where $y_1\geqslant y_2\geqslant \dots\geqslant y_{n-1}$, we have: 
\[
\irr(\mathcal{T}_{\mathscr{D}_j})=\sum_{j=1}^{k-1} \lvert \deg y_j-\deg y_{j+1}\rvert+y_{1}^{2}+y_{k-1}^{2}+\sum_{j=2}^{k-2}\lvert \deg y_j+2\rvert\cdot\lvert \deg y_j-1\rvert-2.
\]
According to the term $\sum_{i=0}^{n-1}d_i\leqslant \sum_{j=0}^{n-1}d_j$ we have two cases as: 
\case{1} If $\sum_{i=0}^{n-1}d_i= \sum_{j=0}^{n-1}d_j$, then for a degree sequence in non-increasing order we have $\sum_{i=0}^{n-1}d_i=2m$ and $\sum_{j=0}^{n-1}d_j=2m$ where  $m$ is the number of edges, then $(x_1,x_2,\dots, x_{n-1})=(y_1,y_2,\dots, y_{n-1})$ if and only if $\mathcal{T}_1$ is isomorphism with $\mathcal{T}_2$ denote by: $\mathcal{T}_1 \cong \mathcal{T}_2$.
\case{2} If $\sum_{i=0}^{n-1}d_i< \sum_{j=0}^{n-1}d_j$, then for a degree sequence in non-increasing order we have vertex sets $V(\mathcal{T}_1) = \{v_1, v_2, \dots, v_{n-1}\}$  and $V(\mathcal{T}_2) = \{w_1, w_2, \dots, w_{n-1}\}$, if $\mathcal{T}_1 \cong \mathcal{T}_2$ then we define isomorphism $f:\mathcal{T}_1 \to \mathcal{T}_2$ where $f(v_i)=w_{\eta(i)}$, then we have: $\deg(v_i) = \deg(w_{\eta(i)})$, in this case, we have $x_i = y_i$, then $(x_1,x_2,\dots, x_{n-1})=(y_1,y_2,\dots, y_{n-1})$.\par
\noindent
Therefore, in both cases, if both a degree sequence $\mathscr{D}_i$ and 
 a degree sequence $\mathscr{D}_j$ satisfy $\sum_{i=0}^{n-1}d_i\leqslant \sum_{j=0}^{n-1}d_j$ when $\mathcal{T}_1 \cong \mathcal{T}_2$, then $\irr(\mathcal{T}_{\mathscr{D}_i})\leqslant \irr(\mathcal{T}_{\mathscr{D}_j})$. 

 As desire.
\end{proof}
\begin{lemma}~\label{albisomorphic}
Let $T_1$ and $T_2$ be a star trees of order $n$, where $T_1 \cong T_2$, then Albertson index of $T_1 \cong T_2$ are: 
\[
\irr(T_1 \cong T_2)=2\times\sum_{i=2}^{n+1}\lvert \deg(u)-\deg(v)\rvert.
\]
\end{lemma}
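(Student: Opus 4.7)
The plan is to reduce the problem to two sub-tasks: first, recognize that the Albertson index is a graph invariant, so any isomorphism $T_1 \cong T_2$ forces $\irr(T_1)=\irr(T_2)$; second, compute $\irr(T_1)$ directly from the star structure, since every edge in a star has the same imbalance.

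First I would fix notation. A star tree of order $n$ is $K_{1,n-1}$; label the center of $T_1$ as $u$ with $\deg(u)=n-1$, and label its leaves $v_2,v_3,\ldots,v_n$, each with $\deg(v_i)=1$. By Definition~\ref{TreeIsomorphic}, the bijection $f:V(T_1)\to V(T_2)$ realizing $T_1\cong T_2$ preserves adjacencies, hence preserves the multiset of edge-pairs of degrees $\{(\deg_{T_1}(x),\deg_{T_1}(y)) : xy\in E(T_1)\}$. Consequently, summing $|\deg(x)-\deg(y)|$ over edges produces the same value for $T_1$ and $T_2$:
\[
\irr(T_1)=\irr(T_2)=\sum_{uv\in E(T_1)}|\deg(u)-\deg(v)|.
\]
This is the invariance step; it is immediate from the definition of $\irr$ together with the edge-preserving bijection.

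Next I would evaluate the sum explicitly. Since each edge of $T_1$ joins the center $u$ to exactly one leaf $v_i$ with $i\in\{2,\ldots,n\}$, every edge contributes the same imbalance $|\deg(u)-\deg(v_i)|=(n-1)-1=n-2$. This identifies
\[
\irr(T_1)=\sum_{i=2}^{n}|\deg(u)-\deg(v_i)|=(n-1)(n-2),
\]
matching the known upper bound $\irr(G)\le (n-1)(n-2)$ recalled in Section~\ref{sec1}, with equality attained at the star.

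Finally I would combine the two. Interpreting $\irr(T_1\cong T_2)$ as the total irregularity carried jointly by the two isomorphic copies (i.e., $\irr(T_1)+\irr(T_2)$), invariance gives
\[
\irr(T_1\cong T_2)=\irr(T_1)+\irr(T_2)=2\,\irr(T_1)=2\sum_{uv\in E(T_1)}|\deg(u)-\deg(v)|,
\]
which is the claimed identity (the summation range $i=2,\ldots,n+1$ being simply the enumeration of the star's leaves under the author's indexing convention). The only delicate point — and the place where I expect the main friction — is making precise what $\irr(T_1\cong T_2)$ means as a single quantity; once that is pinned down to either $\irr(T_1)+\irr(T_2)$ or $\irr(T_1\sqcup T_2)$, both readings collapse to the same value because $\irr$ is additive over disjoint components of a graph.
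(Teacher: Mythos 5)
Your proposal is correct and follows essentially the same route as the paper: compute $\irr$ of each star (every edge contributes the same imbalance, namely the center's degree minus one), observe the two values coincide under the isomorphism, and read $\irr(T_1\cong T_2)$ as $\irr(T_1)+\irr(T_2)$. The only divergence is the order convention --- the paper takes the star to have $n$ leaves and $n+1$ vertices (hence $\irr(T_1)=n(n-1)$ and the sum to $n+1$), whereas you take order $n$ literally as $K_{1,n-1}$ and get $(n-1)(n-2)$ --- but you flag this explicitly and it does not affect the structure of the argument.
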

\begin{proof}
Let $T_1$ and $T_2$ be a trees given in Figure~\ref{fig:Isomorphic}, a star graph with $n$ leaves. Then, for tree $T_1$ we have a tree with $n+1$ vertices, thus we noticed that $\deg(v_1)=n$ and $\deg(v_2)=\deg(v_3)=\dots=\deg(v_n)=1$, then 
\begin{equation}~\label{eqison1}
\irr(T_1)=\sum_{i=2}^{n+1}\lvert \deg_{T_1}(u)-\deg_{T_1}(v)\rvert=n(n-1).
\end{equation}
For tree $T_2$ we have a tree with $n+1$ vertices, thus we noticed that $\deg(u_1)=n$ and $\deg(u_2)=\deg(u_3)=\dots=\deg(u_n)=1$, then as in~(\ref{eqison1}) we have: 
\begin{equation}~\label{eqison2}
\irr(T_2)=\sum_{i=2}^{n+1}\lvert \deg_{T_2}(u)-\deg_{T_2}(v)\rvert=n(n-1).
\end{equation}
From~(\ref{eqison1}),(\ref{eqison2}) and according to Definition~\ref{TreeIsomorphic}, we compute the Albertson index for clarity, then we have: 
\[
\irr(T_1 \cong T_2)=\irr(T_1)+\irr(T_2).
\]
As desire.
\end{proof}

\begin{theorem}~\label{resn1}
Let $T=(V,E)$ be a tree of order $n$, let $\mathscr{D}_1=(d_1,d_2,\dots,d_i)$, $\mathscr{D}_2=(c_1,c_2,\dots,c_j)$ be a tow degree sequences, where $i\neq j$, if there a vertex $v\in V(T)$ with $\deg(v)>1$, then satisfying
\[
\irr(T)+\deg(v)+\sum_{k=1}\deg(d_k)-2\geqslant \irr(T) +\deg(v)+\sum_{k=1}\deg(c_k).
\]
\end{theorem}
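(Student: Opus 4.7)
The plan is to start by simplifying the inequality structurally: the terms $\operatorname{irr}(T)$ and $\deg(v)$ appear identically on both sides, so after cancellation the claim reduces to the purely arithmetic statement
\[
\sum_{k=1}^{i} d_k - 2 \;\geqslant\; \sum_{k=1}^{j} c_k.
\]
Thus the whole weight of the argument is in comparing the totals of the two degree sequences, and the tree $T$ and the vertex $v$ play only the role of providing a common ambient structure (which is why $\operatorname{irr}(T)$ and $\deg(v)$ appear symmetrically).

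Next, I would invoke the handshake lemma applied to the trees whose degree sequences are $\mathscr{D}_1$ and $\mathscr{D}_2$. Since a tree on $k$ vertices has exactly $k-1$ edges, we obtain
\[
\sum_{k=1}^{i} d_k \;=\; 2(i-1), \qquad \sum_{k=1}^{j} c_k \;=\; 2(j-1).
\]
Plugging these into the reduced inequality gives $2(i-1) - 2 \geqslant 2(j-1)$, i.e.\ $i \geqslant j+1$. Since $i$ and $j$ are positive integers with $i\neq j$, the informative direction $i > j$ (WLOG, by labelling $\mathscr{D}_1$ as the longer sequence) immediately yields $i \geqslant j+1$, completing the arithmetic step.

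To anchor the hypothesis that $\deg(v) > 1$, I would then observe that this condition guarantees $v$ is not a leaf, so the two degree sequences $\mathscr{D}_1$ and $\mathscr{D}_2$ can both legitimately contain the contribution of $v$ without degenerating the handshake count; this is also the hypothesis used in Proposition~\ref{classoftrees} to guarantee that the relevant local perturbation at $v$ preserves membership in $\mathcal{T}_{n,\Delta}$. Combining the three ingredients — cancellation, handshake, and the $\deg(v) > 1$ safeguard — delivers the inequality.

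The main obstacle is interpretive rather than computational: the theorem does not spell out explicitly which tree each of $\mathscr{D}_1$ and $\mathscr{D}_2$ is the degree sequence of, nor whether the sums $\sum_{k=1}\deg(d_k)$ mean $\sum_k d_k$ (treating $d_k$ as a degree value) or $\sum_k \deg_T(v_k)$ (treating $d_k$ as a vertex label). I would proceed under the natural reading that both are degree sequences of trees, with $\mathscr{D}_1$ the longer one, because this reading is the only one under which the inequality is both nontrivial and true; any other interpretation would force adding extra hypotheses in the statement.
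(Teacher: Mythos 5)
Your opening reduction is sound and matches what the paper does implicitly: the terms $\operatorname{irr}(T)$ and $\deg(v)$ cancel, and everything hinges on whether $\sum_{k} d_k - 2 \geqslant \sum_{k} c_k$. The handshake computation $\sum_k d_k = 2(i-1)$, $\sum_k c_k = 2(j-1)$ is also the natural way to make the reduced claim concrete. The genuine gap is the ``WLOG'' step. The inequality in the statement is not symmetric in $\mathscr{D}_1$ and $\mathscr{D}_2$ (the $-2$ sits only on the $\mathscr{D}_1$ side), so you are not free to relabel the sequences so that $\mathscr{D}_1$ is the longer one. Under your own reading, the case $i<j$ gives $2(i-1)-2 < 2(j-1)$, i.e.\ the asserted inequality is \emph{false}, and the hypothesis $i\neq j$ explicitly permits that case. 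What you have actually proved is a modified theorem with the added hypothesis $i>j$; you say as much in your final paragraph, but flagging that the statement needs an extra hypothesis is not the same as proving the statement as given.

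For comparison, the paper's proof does attempt to treat both orderings: it splits into Case~1 ($i>j$) and Case~2 ($i<j$) and, within each, subcases on the sign of $\sum_k\deg(d_k)-\sum_k\deg(c_k)$. In the problematic subcases it asserts that the reversed inequality ``holds to'' the desired one by appeal to the constant $-2$ and to Proposition~\ref{pro.se.1}, which is not a valid deduction; so the paper's argument does not close the gap either. The difference is that you isolate the difficulty cleanly (cancellation plus handshake makes the obstruction visible as $i\geqslant j+1$), whereas the paper obscures it behind case bookkeeping. If you want a correct result, the fix is to strengthen the hypothesis from $i\neq j$ to $i>j$ (or $i\geqslant j+1$), at which point your two-line argument is complete and considerably cleaner than the paper's.
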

\begin{proof}
Assume $\deg(v)\geqslant \min(c_j)$ where $c_j\in \mathscr{D}_2$, $j\in \mathbb{N}$, then  according to Proposition~\ref{pro.se.1} we provide $\irr(\mathcal{T}_{\mathscr{D}_i})\leqslant \irr(\mathcal{T}_{\mathscr{D}_j})$. Now, we have 
$\irr_{\max}(T)-\irr_{\min}(T)>1$ when 
\[
\sum_{k=1}\deg(d_k)-\sum_{k=1}\deg(c_k)>\deg(v) \quad \forall \hspace{0.1cm} v\in V(T), 
\]
then we have: 
\begin{align*}
  \deg_{\max}(v)-\deg_{\min}(v)&=\sum_{j=1} (\deg(v_{j+1},v_j)-\deg(v_j,v_{j-1}))\\
  &\leqslant \sum_{j=1} \deg_{\max}(v_j)-\deg_{\min}(v_j)
\end{align*}
for any tree we have the constant term is $-2$ thus,  represents adding the constant 
\[
\deg_{\max}(v)-\deg_{\min}(v)-2\leqslant  \sum_{j=1} \deg_{\max}(v_j)-\deg_{\min}(v_j)-2n.
\]
Therefore, according to the term $i\neq j$ we have tow cases as we express that.
\case{1} In this case we consider $i>j$, then, to simplify the notation  we have
\begin{itemize}
    \item When $\sum_{k=1}\deg(d_k)>\sum_{k=1}\deg(c_k)$ then, the inequality become
\begin{equation}~\label{eqjan1}
    \sum_{k=1}\deg(d_k)-2\geqslant \sum_{k=1}\deg(c_k).
\end{equation}
    \item When $\sum_{k=1}\deg(d_k)<\sum_{k=1}\deg(c_k)$ then, we have
    \[
    \sum_{k=1}\deg(d_k)-2\leqslant \sum_{k=1}\deg(c_k).
    \]
    But we considered $i>j$ thus the inequality holds to
    \[
    \sum_{k=1}\deg(d_k)-2= \sum_{k=1}\deg(c_k).
    \]
\end{itemize}
\case{2} In this case we consider $i<j$, then, to simplify the notation  we have
\begin{itemize}
    \item When $\sum_{k=1}\deg(d_k)<\sum_{k=1}\deg(c_k)$ then, the inequality by using Proposition~\ref{pro.se.1} become for Albertson index among this degree sequence $\mathscr{D}_1$,$\mathscr{D}_2$ as we show that
  \begin{equation}~\label{eqjan2}
      \sum_{k=1}\deg(d_k)-2\leqslant \sum_{k=1}\deg(c_k),
  \end{equation}
   the sum $\sum_{i=1}^{n} (-2)$ represents adding the constant $-2(n)$ times. Thus, according to the constant term in degree sequence $\mathscr{D}_2$ we noticed that~(\ref{eqjan2}) holds to~(\ref{eqjan1}). 
    \item When $\sum_{k=1}\deg(d_k)>\sum_{k=1}\deg(c_k)$ this case holds immediately to~(\ref{eqjan1}) with considered $i<j$.
\end{itemize}
The inequalities for the degree sequences $\mathscr{D}_1$ and $\mathscr{D}_2$  under cases $i>j$  and $i<j$  show that 
\[
\sum_{k=1}\deg(d_k) - 2 \geqslant \sum_{k=1}\deg(c_k)
\]
 depending on the relative magnitudes. Specifically, when 
$\sum_{k=1}\deg(d_k) < \sum_{k=1}\deg(c_k)$ the inequality $\sum_{k=1}\deg(d_k) - 2 \leqslant \sum_{k=1}\deg(c_k)$ aligns with the previous case. In case 2 reinforces the consistency of these relations across degree sequences.
\end{proof}

\section{Sigma Index Among Strong Support Vertex}~\label{sec4}
In this section, let $\mathcal{T}_{n, \Delta}$ be a class of trees with $n$ vertices, in Proposition~\ref{classoftreessigma}, we presented for any support vertex $v_{\ell}$ in $T$, different from $v_0$, where $3<\deg(v_{\ell})<10$. Also, in Proposition~\ref{classoftreessigman2}, we presented that with term $\deg(v_{\ell})\geqslant 11$.
\begin{hypothesize}~\label{hy.sigma5}
Let $T$ be a tree of order $n>0$, and let $\mathscr{D}=(d_1,\dots,d_5)$  be a degree sequence where $d_5\geqslant d_4 \geqslant d_3 \geqslant d_2 \geq d_1$, then Sigma index of $T$ given by: 
\[
\sigma(T)=\sum_{i=1}^{3}d_i(d_{i+1})^2+(d_1-1)^3+(d_4)^3 +\sum_{i=1}^{4}(d_i-d_{i+1})^2.
\]
where inequality holds if and only if $d_i=d_{i-1}+1$.
\end{hypothesize}
\begin{proposition}~\label{classoftreessigma}
Let $\mathcal{T}_{n, \Delta}$ be a class of trees with $n$ vertices, let $T \in \mathcal{T}_{n, \Delta}$ be a tree, and let $v_0 \in V(T)$ be a vertex with maximum degree $\Delta$. For any support vertex $v_{\ell}$ in $T$, different from $v_0$, where $3<\deg(v_{\ell})<10$, then there exists another tree $T^{\prime} \in \mathcal{T}_{n, \Delta}$  that $\sigma(T^{\prime}) < \sigma(T)$.
\end{proposition}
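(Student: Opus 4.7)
The plan is to adapt the local edge-swap argument of Proposition~\ref{classoftrees} to the Sigma index. Let $\lambda = \deg_T(v_\ell)$ with $4 \leq \lambda \leq 9$, and enumerate the open neighborhood $\mathscr{N}_T(v_\ell) = \{v_{\ell,1}, \ldots, v_{\ell,\lambda}\}$ so that $v_{\ell,1}$ is a leaf, which exists by Definition~\ref{Strongsupportvertex}. Form $T'$ from $T$ by deleting the edge $v_\ell v_{\ell,1}$ and inserting the pendant edge $v_{\ell,2} v_{\ell,1}$, choosing $v_{\ell,2}$ with $\deg_T(v_{\ell,2}) \leq \Delta - 1$ so that $T' \in \mathcal{T}_{n,\Delta}$ (this is also needed to preserve the maximum degree witness $v_0$, which is unaffected since $v_\ell \neq v_0$). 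The only degree changes are $\deg_{T'}(v_\ell) = \lambda - 1$ and $\deg_{T'}(v_{\ell,2}) = \deg_T(v_{\ell,2}) + 1$.

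Next, I would partition the edges whose squared-imbalance contribution changes and compute $\sigma(T) - \sigma(T')$ term by term. Writing $d_2 = \deg_T(v_{\ell,2})$, $d_i = \deg_T(v_{\ell,i})$, and letting $z_1, \ldots, z_{d_2-1}$ be the other neighbors of $v_{\ell,2}$, the four disjoint contributions are:
\begin{enumerate}
    \item the swap of the edge $v_\ell v_{\ell,1}$ for $v_{\ell,2} v_{\ell,1}$, giving $(\lambda-1)^2 - d_2^2$;
    \item the edge $v_\ell v_{\ell,2}$, with both endpoint-degrees altered, contributing $(\lambda - d_2)^2 - (\lambda - d_2 - 2)^2 = 4(\lambda - d_2 - 1)$;
    \item the edges $v_\ell v_{\ell,i}$ for $3 \leq i \leq \lambda$: using the identity $a^2 - (a-1)^2 = 2a - 1$, these sum to $\sum_{i=3}^{\lambda} \bigl[2(\lambda - d_i) - 1\bigr]$;
    \item the $d_2 - 1$ edges at $v_{\ell,2}$ distinct from $v_\ell v_{\ell,2}$, where only $\deg(v_{\ell,2})$ rises by one, contributing $-\sum_{j=1}^{d_2-1}\bigl[2(d_2 - \deg(z_j)) + 1\bigr]$.
\end{enumerate}

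The main obstacle will be establishing strict positivity of this total across the window $4 \leq \lambda \leq 9$. The principal gain lives in item (3): because $v_\ell$ is a strong support vertex, several $v_{\ell,i}$ are leaves and each contributes $2\lambda - 3$, so this term is of order $(\lambda - 2)(2\lambda - 3)$. The principal loss lies in item (4) when $v_{\ell,2}$ itself carries leaf neighbors, since there the contribution can be as negative as $-(d_2 - 1)(2d_2 - 1)$. The hypothesis $\lambda < 10$ bounds the degree span so that only finitely many joint profiles $(\lambda, d_2)$ can arise, and the strong-support structure at $v_\ell$ is what supplies the surplus needed for the gain in (3) together with the quadratic term $(\lambda-1)^2 - d_2^2$ in (1) to dominate the loss in (4). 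Working through this finite case analysis yields $\sigma(T) - \sigma(T') > 0$, and thus $\sigma(T') < \sigma(T)$, as required.
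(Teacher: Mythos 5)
Your transformation and edge-by-edge decomposition are essentially the ones the paper uses (delete the pendant edge at $v_\ell$, reattach the leaf to a neighbour $v_{\ell,2}$, compare the changed edges), and your bookkeeping is in fact more complete than the paper's: the paper tracks only the edges incident to $v_\ell$ and silently drops your item (4) — the edges at $v_{\ell,2}$, whose imbalance changes because $\deg(v_{\ell,2})$ increases — while also tacitly assuming $\deg(v_{\ell,1})=1$ and $\deg(v_{\ell,2})=2$ to obtain its constant $2\lambda^2-6\lambda+5$. Be aware, too, that the paper's own computation ends with $\sigma(T)-\sigma(T')<\lambda^2-10\lambda<0$ for $3<\lambda<10$, i.e.\ the \emph{opposite} sign of the inequality you are trying to establish, so the source itself is internally inconsistent about which direction is being proved.

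The genuine gap in your write-up is the final step. You correctly reduce the claim to the positivity of
\[
(\lambda-1)^2-d_2^2+4(\lambda-d_2-1)+\sum_{i=3}^{\lambda}\bigl[2(\lambda-d_i)-1\bigr]-\sum_{j=1}^{d_2-1}\bigl[2(d_2-\deg(z_j))+1\bigr],
\]
and then assert that a finite case analysis over the profiles $(\lambda,d_2)$ settles it. But the hypothesis $\lambda<10$ bounds only $\lambda$; it bounds neither $d_2=\deg_T(v_{\ell,2})$ nor the degrees $d_i$ of the other neighbours, which may be as large as $\Delta$, so the case analysis is not finite. Worse, the quantity is not positive in general: your items (1), (2) and (4) all decrease without bound as $d_2$ grows (the leading term is $-d_2^2$), and item (3) is itself negative whenever $d_i\geq\lambda$, whereas the guaranteed gain from the leaf neighbours of $v_\ell$ is at most of order $(\lambda-1)^2+(\lambda-2)(2\lambda-3)$, hence bounded by a constant on the window $4\leq\lambda\leq 9$. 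For a support vertex whose non-leaf neighbours have large degree, $\sigma(T)-\sigma(T')$ is therefore negative and the argument cannot close. Salvaging it would require either restricting $v_{\ell,2}$ to be a second leaf (available only under the ``two pendant neighbours'' reading of Definition~\ref{Strongsupportvertex}, and even then item (3) can dominate negatively) or imposing degree bounds on all neighbours of $v_\ell$; neither is supplied by the hypotheses.
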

\begin{proof}
Let $T$ be a tree with $x$ vertex, let $y\neq x$ be a strong support vertex where $\deg(y)=\lambda\geqslant 3$ where vertex $y$ has maximum degree $\Delta$ in $T$, let $\mathscr{N}_T(y)=\{y_1,y_2,\dots,y_{\lambda}\}$ be an open neighborhood  of $y$. In this case, we say $y_{\lambda}$ might be in $y$ or not, let $T^{\prime}$ be a tree compute from $T-\{y_1,y_2\}$ by linking with a pendant edge $y_2y_1$, where $T^{\prime} \in \mathcal{T}_{n, \Delta}$, then according to Definition~\ref{Strongsupportvertex} and Proposition~\ref{classoftrees}, noticed that $\deg_{T^{\prime}}(y) = \deg_T(y) - 1$, $\deg_{T^{\prime}}(y_1) = \deg_T(y_1)$ and $\deg_{T^{\prime}}(y_2) = d_T(y_2) + 1$, thus $\deg_{T^{\prime}}(y_i) = \deg_T(y_i)$ for $i = 3, \ldots, \lambda$, then
\begin{align*}
   \sigma(T) -\sigma(T^{\prime})&=\sum_{uv\in E(T)}\left( \deg_{T}(u)-\deg_{T}(v)\right)^2-\sum_{uv\in E(T^{\prime})}\left( \deg_{T^{\prime}}(u)-\deg_{T^{\prime}}(v)\right)^2\\
   &=\left( \deg_{T}(y_1)-\deg_{T}(y)\right)^2+\left( \deg_{T}(y_2)-\deg_{T}(y)\right)^2+\sum_{i=3}^{\lambda}\left( \deg_{T}(y)-\deg_{T}(y_i)\right)^2-\\
   &-\left( \deg_{T^{\prime}}(y_1)-\deg_{T^{\prime}}(y_2)\right)^2-\left( \deg_{T^{\prime}}(y_2)-\deg_{T^{\prime}}(y)\right)^2-\sum_{i=3}^{\lambda} \left( (\deg_{T}(y)-1)-\deg_{T}(y_i)\right)^2\\
   &=2\lambda^2-6\lambda+5+\sum_{i=3}^{\lambda}\left( \lambda-\deg_{T}(y_i)\right)^2-(\lambda+2)^2-1-\sum_{i=3}^{\lambda} \left( (\lambda-1)-\deg_{T}(y_i)\right)^2\\
   &<2\lambda^2-6\lambda+5-(\lambda+2)^2-1\\
   &=\lambda^2-10\lambda<0\quad \text{ when } 3<\lambda <10
\end{align*}
As desire.
\end{proof}
\begin{proposition}~\label{classoftreessigman2}
Let $\mathcal{T}_{n, \Delta}$ be a class of trees with $n$ vertices, let $T \in \mathcal{T}_{n, \Delta}$ be a tree, and let $v_0 \in V(T)$ be a vertex with maximum degree $\Delta$. For any support vertex $v_{\ell}$ in $T$, different from $v_0$, where $\deg(v_{\ell})\geqslant 11$, then there exists another tree $T^{\prime} \in \mathcal{T}_{n, \Delta}$  that $\sigma(T^{\prime}) > \sigma(T)$.
\end{proposition}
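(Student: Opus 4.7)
The plan is to mimic the local edge-swap transformation already used in the proof of Proposition~\ref{classoftreessigma}. Let $y = v_{\ell}$ be a strong support vertex with $\deg_T(y) = \lambda \geq 11$, $y \neq v_0$, and let $\mathscr{N}_T(y) = \{y_1, y_2, \ldots, y_{\lambda}\}$. Since $y$ is a strong support vertex, at least one neighbor (say $y_1$) is a pendant. Form $T' \in \mathcal{T}_{n,\Delta}$ from $T$ by deleting the edge $yy_1$ and inserting the edge $y_2 y_1$; since $v_0$ is untouched, the maximum degree is preserved and $T' \in \mathcal{T}_{n,\Delta}$. First I would verify that this transformation lies in the admissible class, which is immediate from $\deg_{T'}(y) = \lambda-1 < \Delta$ and $\deg_{T'}(v_0) = \Delta$.

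Next, I would write $\sigma(T) - \sigma(T')$ using only the edges whose endpoint degrees change under the swap. With $\deg_{T'}(y) = \lambda-1$, $\deg_{T'}(y_2) = \deg_T(y_2)+1$, $\deg_{T'}(y_1) = \deg_T(y_1) = 1$, and $\deg_{T'}(y_i) = \deg_T(y_i)$ for $i \geq 3$, the difference collapses edge-by-edge to the same expression that appears in the display in the proof of Proposition~\ref{classoftreessigma}. After cancellation, the expression reduces to a polynomial in $\lambda$ whose leading part is $\lambda^2 - 10\lambda = \lambda(\lambda-10)$, together with a residual sum over the remaining neighbors $y_3, \ldots, y_{\lambda}$.

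The decisive observation is that the sign of $\lambda(\lambda-10)$ flips at $\lambda = 10$. In the range $3 < \lambda < 10$ covered by Proposition~\ref{classoftreessigma}, this quantity is negative; once $\lambda \geq 11$, it is strictly positive and at least $11$. Hence, provided the residual does not reverse the sign, we obtain $\sigma(T) - \sigma(T') > 0$, equivalently $\sigma(T') > \sigma(T)$, which is the content of the proposition. The range $\lambda \geq 11$ is precisely the regime where the quadratic dominates.

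The main obstacle I anticipate is controlling the residual sum
\[
\sum_{i=3}^{\lambda} \bigl[(\lambda - \deg_T(y_i))^2 - ((\lambda-1) - \deg_T(y_i))^2\bigr] = \sum_{i=3}^{\lambda} \bigl(2\lambda - 2\deg_T(y_i) - 1\bigr),
\]
which was dropped in the proof of the companion proposition. For $\lambda \geq 11$ this sum typically reinforces positivity, because $\deg_T(y_i) \leq \lambda$ for every neighbor and, for the pendant neighbors guaranteed by the strong-support hypothesis, each summand equals $2\lambda - 3 > 0$. The careful step is to split $\{3,\ldots,\lambda\}$ into pendant and non-pendant indices, apply $\deg_T(y_i) \leq \lambda$ on the latter to bound the negative contribution, and confirm that even in the worst case (every $y_i$ with $i \geq 3$ having degree as large as possible) the total remains dominated by the positive $\lambda(\lambda-10)$ term once $\lambda \geq 11$. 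Combining this with the leading-order estimate then yields the desired strict inequality $\sigma(T') > \sigma(T)$.
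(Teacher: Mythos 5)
Your overall strategy---reusing the edge-swap from the proof of Proposition~\ref{classoftreessigma} and examining the sign of $\lambda(\lambda-10)$---is exactly what the paper intends (its proof is literally ``immediately from the proof of Proposition~\ref{classoftreessigma}''). However, there is a genuine gap at the decisive step, and it is a sign error that inverts the conclusion. You write that $\sigma(T)-\sigma(T')>0$ is ``equivalently $\sigma(T')>\sigma(T)$''; it is not---it is equivalent to $\sigma(T')<\sigma(T)$. So even if you succeed in showing that $\lambda(\lambda-10)$ plus the residual sum is strictly positive for $\lambda\geqslant 11$, what you have proved is that the modified tree has a \emph{smaller} Sigma index, which is the negation of the proposition's claim. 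To obtain $\sigma(T')>\sigma(T)$ you would need $\sigma(T)-\sigma(T')<0$, and your own analysis (positive quadratic term, residual that ``reinforces positivity'') points the other way. As written the argument cannot close: either the transformation must be changed, or the inequality actually being established is the opposite of the one stated. (The same tension is already present in the paper: the displayed chain in the proof of Proposition~\ref{classoftreessigma} ends with $\sigma(T)-\sigma(T')<\lambda^2-10\lambda<0$, which yields $\sigma(T')>\sigma(T)$ even though that proposition asserts $\sigma(T')<\sigma(T)$---so the inconsistency is not yours alone, but your proposal does not resolve it.)

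A secondary issue: the chain you are importing gives an \emph{upper} bound $\sigma(T)-\sigma(T')<\lambda^2-10\lambda$, because the residual was discarded by a one-sided estimate. For $3<\lambda<10$ a negative upper bound is conclusive; for $\lambda\geqslant 11$ a positive upper bound says nothing, so you are right that the residual $\sum_{i=3}^{\lambda}\bigl(2\lambda-2\deg_T(y_i)-1\bigr)$ must now be bounded from \emph{below}. But your claim that it ``typically reinforces positivity'' is not a proof: since $v_{\ell}\neq v_0$, a neighbor $y_i$ may be $v_0$ itself or another vertex of degree up to $\Delta\geqslant\lambda$, making individual summands as negative as $2\lambda-2\Delta-1$, and with no control on how many such neighbors occur the residual can swamp $\lambda(\lambda-10)$. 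So even the inequality in the direction you are actually computing is not established without further hypotheses on the degrees of the neighbors of $v_{\ell}$.
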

\begin{proof}
Immediately from proof of Proposition~\ref{classoftreessigma}.
\end{proof}
\begin{corollary}~\label{cor.3}
Let  $T$ be a tree of order $n$, then satisfying: 
\begin{enumerate}
    \item For $d_3>3$, then
    \[
\log_{d_4-2}\left(\frac{2d_4-4}{d_3-1}\right) < 2 + \left\lfloor \frac{d_4-2}{d_3-1} \right\rfloor.
\]
\item For $n>2$, then we have: 
\[
\log_{2n-1}\left( 2d_i\right)_{i>1}+\sqrt{\left( 2d_i\right)_{i>1}^2-1}\geqslant 2+\lfloor\left( 2d_i\right)_{i>1}-1\rfloor.
\]
\end{enumerate}
\end{corollary}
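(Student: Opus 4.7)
The plan is to treat the corollary as two independent analytic estimates and prove each by elementary means, leveraging the integrality and positivity of the degree-sequence entries together with the structural constraints imposed by Hypothesize~\ref{hy.sigma5} and the preceding propositions on strong support vertices.

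For part $(1)$, I would first clean up the statement by setting $u = d_4 - 2$ and $w = d_3 - 1$. The hypothesis $d_3 > 3$ gives $w \geq 3$, and the ordering $d_4 \geq d_3$ inherited from Hypothesize~\ref{hy.sigma5} yields $u \geq 2$. Since $\frac{2d_4-4}{d_3-1} = \frac{2u}{w}$, the target inequality becomes
\[
\log_u 2 + 1 - \log_u w \;<\; 2 + \left\lfloor \frac{u}{w}\right\rfloor.
\]
I would then case-split on the sign of $u - w$. If $u \geq w$, then $\log_u w \geq 0$, so the left-hand side is at most $\log_u 2 + 1 \leq 2$, while $\lfloor u/w \rfloor \geq 1$ makes the right-hand side at least $3$. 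If $u < w$, then $\log_u w > 1$, so the left-hand side is strictly below $\log_u 2 < 1$, whereas the right-hand side equals exactly $2$. In either case the strict inequality follows immediately.

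For part $(2)$, I first fix a reading of the notation $(2d_i)_{i>1}$ as the substitution $x := 2d_i$ for an entry of index $i > 1$. Because $x$ is then a positive even integer, $\lfloor x - 1 \rfloor = x - 1$, and the claim reduces to
\[
\log_{2n-1}(x) + \sqrt{x^2 - 1} \;\geq\; x + 1.
\]
The key analytic move is the rationalization
\[
x + 1 - \sqrt{x^2 - 1} \;=\; 1 + \bigl(x - \sqrt{x^2-1}\bigr) \;=\; 1 + \frac{1}{x + \sqrt{x^2-1}},
\]
which expresses the deficit as $1 + O(1/x)$. Both sides of the inequality are monotone increasing in $x$ (the radical term because $x/\sqrt{x^2-1} > 1$, the logarithmic term trivially), so the plan is to verify the inequality at the smallest admissible value of $x$ forced by the context and then extend by monotonicity; the hypothesis $n > 2$ ensures that the base $2n - 1 \geq 5$, giving enough room for $\log_{2n-1}(x)$ to absorb the residual $1/(x + \sqrt{x^2-1})$.

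The main obstacle will be in part $(2)$, where the comparison is genuinely tight: the deficit $x + 1 - \sqrt{x^2-1}$ approaches $1$ from above, while $\log_{2n-1}(x)$ can be made arbitrarily small by enlarging $n$ relative to $x$. The proof therefore hinges on a careful interplay between $n$ and the lower bound on $d_i$ coming from the tree structure. The cleanest route is to invoke the hypotheses of Propositions~\ref{classoftreessigma} and~\ref{classoftreessigman2}, under which the relevant entry $d_i$ for $i>1$ is bounded below by a quantity commensurable with $n$, so that $x$ and $2n-1$ remain comparable and the logarithmic contribution is at least $1$. Once this commensurability is established, the monotonicity argument closes the gap uniformly.
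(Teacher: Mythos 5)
Your treatment of part (1) is correct and complete: with $u=d_4-2$, $w=d_3-1$, the hypotheses $d_4\geq d_3>3$ give $u\geq 2$, $w\geq 3$, and the two cases $u\geq w$ (left side at most $2$, right side at least $3$) and $u<w$ (left side below $\log_u 2\leq 1$, right side equal to $2$) exhaust all possibilities. For what it is worth, the paper states Corollary~\ref{cor.3} with no proof at all, so on part (1) your argument actually supplies something the paper omits.

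Part (2), however, contains a fatal gap that your own "main obstacle" paragraph identifies and then waves away. Under your reading $x=2d_i$, the inequality is equivalent to
\[
\log_{2n-1}(x)\;\geq\;1+\frac{1}{x+\sqrt{x^2-1}}\;>\;1,
\]
which forces $x>2n-1$. But in any tree of order $n$ every degree satisfies $d_i\leq \Delta(T)\leq n-1$, hence $x=2d_i\leq 2n-2<2n-1$ and $\log_{2n-1}(x)<1$ \emph{always}. The "commensurability between $x$ and $2n-1$" you propose to extract from Propositions~\ref{classoftreessigma} and~\ref{classoftreessigman2} cannot exist: those propositions only constrain $\deg(v_\ell)$ from below by constants ($3<\deg(v_\ell)<10$ or $\deg(v_\ell)\geq 11$), and no lower bound on a tree degree can exceed $n-1$. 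So under your interpretation the claimed inequality is false for every tree and every admissible $i$ (e.g.\ $n=3$, $d_i=1$ gives $\log_5 2+\sqrt{3}\approx 2.16<3$), and no monotonicity argument can close the gap. The only way to salvage part (2) is to adopt a different reading of the notation $\left(2d_i\right)_{i>1}$ (for instance a sum over all $i>1$), which you explicitly did not do; as written, your plan for part (2) proves a false statement and therefore cannot be completed.
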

\begin{theorem}~\label{thm.sigma}
Let $T$ be a tree of order $n>0$, define degree sequence $\mathscr{D}=(d_1,\dots,d_n)$ where $d_n\geqslant \dots \geqslant d_1$, then Sigma index among tree given as:
\[
\sigma(T)=\sum_{i\in\{1,n\}}(d_i+1)(d_i-1)^2+\sum_{i=2}^{n-1}(d_i+2)(d_i-1)^2+\sum_{i=2}^{n-1}(d_i-d_{i+1})^2+2n-2.
\]
\end{theorem}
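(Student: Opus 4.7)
The plan is to begin from the edge-based definition $\sigma(T)=\sum_{uv\in E(T)}(d_T(u)-d_T(v))^2$ and re-express it as a sum indexed by vertices of the ordered degree sequence $\mathscr{D}=(d_1,d_2,\dots,d_n)$ with $d_n\geqslant\dots\geqslant d_1$. The target formula splits naturally into three pieces, namely the boundary contribution over $i\in\{1,n\}$, the interior contribution over $2\leqslant i\leqslant n-1$, and a consecutive-difference term $\sum_{i=2}^{n-1}(d_i-d_{i+1})^2$ plus the constant $2n-2=2(n-1)$. I will treat each of these three pieces by identifying which edges of $T$ contribute to it.

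First I would fix the convention that each vertex $v_i$ of $T$ carries label $i$ according to the non-increasing ordering, and partition $E(T)$ into the set $E_b$ of edges incident to an extremal vertex ($v_1$ or $v_n$), the set $E_c$ of ``consecutive'' edges of the form $v_iv_{i+1}$ for $2\leqslant i\leqslant n-1$, and the set $E_p$ of remaining edges, each attaching an interior vertex to a pendant or sibling of it. For $E_c$ the contribution is exactly $\sum_{i=2}^{n-1}(d_i-d_{i+1})^2$, by definition. For $E_b$ I would use the identity
\[
\sum_{uv\in E,\, u\in\{v_1,v_n\}}(d_u-d_v)^2=\sum_{i\in\{1,n\}}(d_i+1)(d_i-1)^2,
\]
which follows by noting that each extremal vertex $v_i$ has $d_i$ edges, each contributing $(d_i-1)^2$ up to a boundary correction of magnitude $d_i+1$ captured from the handshake relation and from Hypothesize~\ref{hy.sigma5} applied to the star-neighborhood of $v_i$. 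A parallel argument for interior vertices, where each $v_i$ ($2\leqslant i\leqslant n-1$) has two ``spine'' neighbors and $d_i-2$ pendant-style neighbors, will produce the factor $(d_i+2)(d_i-1)^2$ in the second sum.

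The residual constant $2(n-1)$ then arises from the handshake lemma $\sum_{i=1}^{n}d_i=2(n-1)$ for trees: after collecting all the $(d_i-1)^2$ terms with their multiplicities and subtracting the double-counted cross terms between $E_c$ and $E_b\cup E_p$, the uncollected linear terms amount to $2\sum_{i=1}^{n}d_i-\text{(double counts)}$, and these simplify exactly to $2n-2$. For the cross terms I would rely on Proposition~\ref{pro.se.1}, which guarantees that the monotone ordering of $\mathscr{D}$ is compatible with the edge partition and yields no sign ambiguities in $(d_i-d_{i+1})^2$.

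The main obstacle, and the step that will require the most care, is justifying that the $\sum_{i=2}^{n-1}(d_i-d_{i+1})^2$ contribution really corresponds to an edge set in $T$ — that is, that the ``spine'' edges $v_iv_{i+1}$ all exist in the tree compatible with the non-increasing ordering. This is not automatic for arbitrary trees, and I expect one of two resolutions: either an inductive construction in the spirit of Proposition~\ref{classoftrees} (removing a pendant and re-indexing), or an appeal to the caterpillar structure produced by Proposition~\ref{caterpillarvertex} when $T$ is extremal for $\sigma$. Either route reduces the statement to routine algebraic verification once the edge bookkeeping is in place, so the combinatorial identification of the spine is the genuine obstacle; the remainder is an exercise in collecting coefficients of $d_i^3$, $d_i^2$, $d_i$, and constants and matching them against the target formula.
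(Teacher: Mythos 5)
Your plan is incomplete by your own admission, and the obstacle you single out --- justifying that the ``spine'' edges $v_iv_{i+1}$ actually exist in $T$ --- is not merely the hard step; it is insurmountable, because the claimed identity is false for general trees. The quantity $\sigma(T)=\sum_{uv\in E(T)}(d_u-d_v)^2$ is not determined by the degree sequence alone, so no amount of edge bookkeeping over a partition $E_b\cup E_c\cup E_p$ can produce a formula depending only on $(d_1,\dots,d_n)$. Concretely, take $n=7$ and the degree sequence $(3,2,2,2,1,1,1)$: the spider obtained from a degree-$3$ center with three pendant paths of length $2$ has $\sigma=3\cdot1^2+3\cdot1^2=6$, while the broom obtained from a path $a_4a_3a_2a_1c$ with two extra leaves at $c$ has $\sigma=1+0+0+1+4+4=10$; the right-hand side of the theorem evaluates to $16+12+2+12=42$ for this sequence, matching neither. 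This also undercuts your proposed identity for $E_b$, which you support only by an appeal to Hypothesize~\ref{hy.sigma5} --- itself an unproven statement in the paper --- and by an unspecified ``boundary correction''; the coefficient $(d_i+1)$ versus $(d_i+2)$ cannot be extracted from the handshake lemma without knowing which vertices are actually adjacent to which.

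For comparison, the paper's own argument does not use an edge partition at all: it asserts the formula for $n\in[3,6]$ without verification and then ``proves'' the inductive step by formally re-indexing and shifting the degree sequence, never engaging with $E(T)$. So your approach is structurally more honest --- it at least tries to tie each summand to a set of edges and correctly identifies where that attempt must fail --- but neither route can succeed, because the statement itself would need an additional structural hypothesis (e.g.\ restricting to a specific caterpillar with prescribed spine adjacencies $v_iv_{i+1}$) before any proof could exist.
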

\begin{proof}
It is well known Sigma index as $\sigma(G)=\sum_{uv\in E(G)}(d_u-d_v)^2$, let  $\mathscr{D}=(d_1,d_2,\dots,d_n)$ be a degree sequence of order $n\geqslant1$ where $d_n\geqslant d_{n-1} \geqslant \dots \geqslant d_2 \geqslant d_1$, then we have following cases.
\case{1} Sigma index among degree sequence given as $\sigma(T)=\sum_{i=1}^{n-1}(d_i+1)(d_i-1)^2+\sum_{i=1}^{3}(d_i-d_{i+1})^2-2$. Also, we found $\sigma(T)=\sum_{i=1}^{2} (d_i+1)(d_i-1)^2+\sum_{i=1}^{4}(d_i-d_{i+1})^2+\sum_{i=2}^{3}(d_i+2)(d_i-1)^2-2$, by considering the maximum and minimum values in both cases and the terms constant. Actually, for a degree sequence of order 6, we have $\sigma(T)=\sum_{i\in\{1,n\}}(d_i+1)(d_i-1)^2+\sum_{i=2}^{n-1}(d_i-d_{i+1})^2+\sum_{i=2}^{n}(d_i+2)(d_i-1)^2+10$.
So that, the relationship is correct for $n\in[3,6]$, then we need to prove that for $n$, then we prove that for $n+1$.\par
let be assume the sequence $\mathscr{D}=(d_1,\dots,d_n)$ where $d_n\geqslant \dots \geqslant d_1$, then we have:
\case{2} in this case, we have:
\begin{align*}
\sigma(T)&=(d_1+1)(d_1-1)^2+(d_n+1)(d_n-1)^2+(d_1-d_2)^2+\dots+(d_{n-1}-d_n)^2+\\
&+(d_2+2)(d_2-1)^2+(d_3+2)(d_3-1)^2+\dots+(d_{n-1}+2)(d_{n-1}-1)^2\\
&=\sum_{i\in\{1,n\}}(d_i+1)(d_i-1)^2+\sum_{i=2}^{n-1}(d_i+2)(d_i-1)^2+\sum_{i=1}^{n}(d_i-d_{i+1})^2.
\end{align*}
\case{3} In this case, let be shifting the sequence as $d=(d_{n-1}, \dots, d_2)$, then we have: 
\begin{align*}
\sigma(T)&=(d_2+1)(d_2-1)^2+(d_{n-1}+1)(d_{n-1}-1)^2+(d_1-d_{n-2})^2+\dots+(d_n-d_2)^2+\\
&+(d_1+2)(d_1-1)^2+(d_{n-2}+2)(d_{n-2}-1)^2+\dots+(d_n+2)(d_n-1)^2\\
&=\sum_{i\in\{2,n-1\}}(d_i+1)(d_i-1)^2+\sum_{i=2}^{n-1}(d_i+2)(d_i-1)^2+\sum_{i=1}^{n}(d_i-d_{i+1})^2.
\end{align*}

So, that the relationship is correct for $n$, now let be prove that for $n+1$, in this case we have the sequence is $\mathscr{D}=(d_2,\dots, d_{n+1})$ where $d_{n+1}\geqslant \dots \geqslant d_1$. 
\case{4} In this case, we have: 
\begin{align*}
\sigma(T)&=(d_2+1)(d_2-1)^2+(d_{n+1}+1)(d_{n+1}-1)^2+(d_2-d_3)^2+\dots+(d_{n}-d_{n+1})^2+\\
&+(d_3+2)(d_3-1)^2+(d_4+2)(d_4-1)^2+\dots+(d_n+2)(d_n-1)^2\\
&=\sum_{i\in\{2,n+1\}}(d_i+1)(d_i-1)^2+\sum_{i=3}^{n}(d_i+2)(d_i-1)^2+\sum_{i=2}^{n+1}(d_i-d_{i+1})^2.
\end{align*}

\case{5} In this case, let be shifting the sequence as $\mathscr{D}=(d_n, \dots, d_3)$, then we have:
\begin{align*}
\sigma(T)&=(d_3+1)(d_3-1)^2+(d_n+1)(d_n-1)^2+(d_3-d_{n-1})^2+\dots+(d_{n+1}-d_2)^2+\\
&+(d_2+2)(d_2-1)^2+(d_{n-1}+2)(d_{n-1}-1)^2+\dots+(d_{n+1}+2)(d_{n+1}-1)^2\\
&=\sum_{i\in\{3,n\}}(d_i+1)(d_i-1)^2+\sum_{i=2}^{n}(d_i+2)(d_i-1)^2+\sum_{i=1}^{n+1}(d_i-d_{i+1})^2.
\end{align*}
So that the relationship is correct for $n+1$.

As desire.
\end{proof}
\begin{example}
Let be a degree sequence is $d=(4,8,10,14,18,20)$, then for Max sigma we have: 
\begin{align*}
&[14802, 18, 4, 10, 14, 8, 20], [14802, 18, 4, 10, 14, 8, 20], [14802, 18, 4, 14, 8, 10, 20], [14802, 18, 4, 14, 10, 8, 20], \\
& [14802, 18, 8, 10, 14, 4, 20], [14802, 18, 8, 14, 4, 10, 20], [14802, 18, 8, 14, 10, 4, 20], [14802, 18, 10, 4, 14, 8, 20], \\
& [14802, 18, 10, 8, 14, 4, 20], [14802, 20, 4, 10, 14, 8, 18], [14802, 20, 4, 14, 8, 10, 18], [14802, 20, 4, 14, 10, 8, 18], \\
& [14802, 20, 8, 10, 14, 4, 18], [14802, 20, 8, 14, 4, 10, 18], [14802, 20, 8, 14, 10, 4, 18], [14802, 20, 10, 4, 14, 8, 18], \\
&[14802, 20, 10, 8, 14, 4, 18].
\end{align*}
and for Min sigma we have: 
\begin{align*}
&[14196, 4, 10, 14, 18, 20, 8], [14196, 4, 10, 14, 18, 20, 8], [14196, 4, 10, 14, 20, 18, 8], [14196, 4, 10, 18, 20, 14, 8],\\
& [14196, 4, 10, 20, 18, 14, 8], [14196, 4, 14, 18, 20, 10, 8], [14196, 4, 14, 20, 18, 10, 8], [14196, 4, 18, 20, 14, 10, 8], \\
& [14196, 4, 20, 18, 14, 10, 8], [14196, 8, 10, 14, 18, 20, 4], [14196, 8, 10, 14, 20, 18, 4], [14196, 8, 10, 18, 20, 14, 4], \\
& [14196, 8, 10, 20, 18, 14, 4], [14196, 8, 14, 18, 20, 10, 4], [14196, 8, 14, 20, 18, 10, 4], [14196, 8, 18, 20, 14, 10, 4],\\
&[14196, 8, 20, 18, 14, 10, 4].
\end{align*}

\end{example}
\section{Conclusion}\label{sec5}
Through this paper, we presented both of topological index (Albertson and Sigma) among strong support vertex, this results implied clearly in Proposition~\ref{caterpillarvertex}, and in Proposition~\ref{classoftrees} as $\irr(T^{\prime}) < \irr(T)$. \par 
Sigma index among tree with strong support vertex given in Proposition~\ref{classoftreessigman2} with term $\deg(v_{\ell})\geqslant 11$, later,in Theorem~\ref{thm.sigma}, we provide Sigma index among trees as:
\[
\sigma(T)=\sum_{i\in\{1,n\}}(d_i+1)(d_i-1)^2+\sum_{i=2}^{n-1}(d_i+2)(d_i-1)^2+\sum_{i=2}^{n-1}(d_i-d_{i+1})^2+2n-2.
\]
\section*{Acknowledgments}
I would like to extend my deepest gratitude to Prof.Belov Alexey Yakovlevich.

\end{document}